\newcommand{\procategory}{\xspace{pro-$\infty$-cat\-e\-go\-ry}\xspace}
\newcommand{\procategories}{\xspace{pro-$\infty$-cat\-e\-gories}\xspace}
\newcommand{\Sigmaprimeclosed}{\xspace{$\Sigma'$-closed}\xspace}
\newcommand{\Sigmaclosed}{\xspace{$\Sigma$-closed}\xspace}
\newcommand{\Sigmacomplete}{\xspace{$\Sigma$-com\-plete}\xspace}
\newcommand{\Sigmacompletion}{\xspace{$\Sigma$-com\-ple\-tion}\xspace}
\newcommand{\Sigmacompletions}{\xspace{$\Sigma$-com\-ple\-tions}\xspace}
\newcommand{\Sigmagroup}{\xspace{$\Sigma$-group}\xspace}
\newcommand{\Sigmagroups}{\xspace{$\Sigma$-groups}\xspace}
\newcommand{\Sigmafinite}{\xspace{$\Sigma$-fi\-nite}\xspace}
\newcommand{\proSigma}{\xspace{pro-$\Sigma$}\xspace}
\newcommand{\RTop}{\categ{RTop}}
\newcommand{\BSigmacomp}{\Bup\Sigmacomp}
\newcommand{\Bprofin}{\Bup\profincomp}
\newcommand{\Btrun}{\Bup_{<\infty}}
\newcommand{\protrun}{\trun_{<\infty}}
\newcommand{\SpcSigma}{\Spc_{\Sigma}}
\newcommand{\ProSpcSigma}{\Pro(\SpcSigma)}
\newcommand{\Sigmacomp}{_{\Sigma}^{\wedge}}
\newcommand{\cospecializes}{\leftsquigarrow}
\newcommand{\profincomp}{_{\uppi}^{\wedge}}
\newcommand{\pietprofin}{\hat{\uppi}^{\et}}
\DeclareMathOperator{\BGal}{BGal}
\newcommand{\Gk}{\Gup_{k}}
\newcommand{\BG}{\mathrm{BG}}
\newcommand{\BGk}{\BG_{k}}
\newcommand{\Piet}{\Pi_{\infty}^{\et}}
\newcommand{\Pietprotrun}{\Pi_{<\infty}^{\et}}
\newcommand{\Pietprofin}{\widehat{\Pi}_{\infty}^{\et}}
\DeclareMathOperator{\qcqs}{qcqs}
\newcommand{\Schqcqs}{\Sch^{\qcqs}}
\newcommand{\modmod}{\mathbin{\sslash}}
\newcommand{\ProCat}{\Pro(\Catinfty)}
\newcommand{\ProSpc}{\Pro(\Spc)}
\newcommand{\Spcfin}{\Space_{\uppi}}
\newcommand{\Spctrun}{\Space_{<\infty}}
\newcommand{\ProSpcfin}{\Pro(\Spcfin)}
\newcommand{\ProSpctrun}{\Pro(\Spctrun)}
\DeclareMathOperator{\Nbd}{Nbd}
\newcommand{\Xkbar}{X_{\kbar}}
\newcommand{\Xsbar}{X_{\sbar}}
\newcommand{\Xsbaret}{X_{\sbar,\et}}
\newcommand{\Loc}[2]{{#1}_{(#2)}}
\newcommand{\Locet}[2]{{#1}_{(#2),\et}}
\title{\Large The fundamental fiber sequence in étale homotopy theory}
\author{\normalsize Peter J. Haine \and\normalsize Tim Holzschuh \and\normalsize Sebastian Wolf}
\date{\normalsize \today}
\begin{document}

\maketitle


\begin{abstract} 
	Let $ k $ be a field with separable closure $ \kbar \supset k $, and let $ X $ be a qcqs $ k $-scheme.
	We use the theory of profinite Galois categories developed by Barwick--Glasman--Haine to provide a quick conceptual proof that the sequences
	\begin{equation*}
		\Pietprotrun(\Xkbar) \to \Pietprotrun(X) \to \BGal(\kbar/k) \andeq \Pietprofin(\Xkbar) \to \Pietprofin(X) \to \BGal(\kbar/k)
	\end{equation*}
	of protruncated and profinite étale homotopy types are fiber sequences.
	This gives a common conceptual reason for the following two phenomena:
	first, the higher étale homotopy groups of $ X $ and the geometric fiber $ \Xkbar $ are isomorphic, and
	second, if $ \Xkbar $ is connected, then the sequence of profinite étale fundamental groups \smash{$ 1 \to \pietprofin_1(\Xkbar) \to \pietprofin_1(X) \to \Gal(\kbar/k) \to 1 $} is exact.
	It also proves the analogous results for the \textit{groupe fondamental élargi} of SGA3.
\end{abstract}



\setcounter{tocdepth}{1}

\tableofcontents


\setcounter{section}{-1}


\section{Introduction}

Let $ k $ be a field with separable closure $ \kbar \supset k $, and let $ X $ be a qcqs $ k $-scheme.
Write $ \Xkbar $ for the basechange of $ X $ to $ \kbar $.
The \textit{fundamental exact sequence} for étale fundamental groups asserts that if $ \Xkbar $ is connected, then the natural sequence of profinite groups
\begin{equation}\label{seq:fundamental_exact_sequence}
	\begin{tikzcd}[sep=1.5em]
		1 \arrow[r] & \pietprofin_1(\Xkbar) \arrow[r] & \pietprofin_1(X) \arrow[r] & \Gal(\kbar/k) \arrow[r] & 1
	\end{tikzcd}
\end{equation}
is exact \cites[\stackstag{0BTX}]{stacksproject}[Exposé IX, Théorème 6.1]{MR50:7129}.
The purpose of this paper is to explain a simple argument that extends this short exact sequence to a fiber sequence of étale homotopy types in the sense of Artin--Mazur--Friedlander \cites{MR0245577}[Chapter 4]{MR676809}.
To do this, we make use of Barwick, Glasman, and Haine's new description of the étale homotopy type in terms of profinite Galois categories \cites[Theorem 12.5.1]{arXiv:1807.03281}{arXiv:1812.11637}.

Given a scheme $ Y $, there are two variants of the étale homotopy type relevant to the present work: the first is the (protruncated) étale homotopy type \smash{$ \Pietprotrun(Y) $}.
This is a prospace which is not generally profinitely complete.
The fundamental progroup of \smash{$ \Pietprotrun(Y) $} recovers the \textit{groupe fondamental élargi} of SGA3 \cite[Exposé X, \S6]{MR43:223b}.
The second is the profinite completion of \smash{$ \Pietprotrun(Y) $}, which we denote by \smash{$ \Pietprofin(Y) $}.
The fundamental progroup of \smash{$ \Pietprofin(Y) $} recovers the usual profinite étale fundamental group \smash{$ \pietprofin_1(Y) $}.
The profinite étale homotopy type also comes equipped with a natural comparison morphism \smash{$ \fromto{\Pietprotrun(Y)}{\Pietprofin(Y)} $}.
If $Y$ is geometrically unibranch, both variants coincide: in this case, the comparison morphism $ \fromto{\Pietprotrun(Y)}{\Pietprofin(Y)} $ is an equivalence \cites[Theorem 3.6.5]{DAGXIII}[Theorem 11.1]{MR0245577}[Theorem 7.3]{MR676809}.
See \cref{subsec:etale_homotopy_types_Galois_categories} for a short recollection of the modern perspective on the étale homotopy type and how it relates to the classical definition.

The following are the main results of this paper.
We note that with our methods, the only thing particular to working over a field that we need is that $ \Spec(k) $ is $ 0 $-dimensional.
Write $ \ProSpc $ for the \category of prospaces and \smash{$ \ProSpcfin \subset \ProSpc $} for the full subcategory spanned by the profinite spaces.

\begin{theorem}[(\Cref{cor:protruncated_fundamental_fiber_sequence})]\label{intro_thm:protruncated_fundamental_fiber_sequence}
	Let $ f \colon \fromto{X}{S} $ be a morphism between qcqs schemes, and let $ \fromto{\sbar}{S} $ be a geometric point of $ S $.
	If $ \dim(S) = 0 $, then the naturally null sequence
	\begin{equation}\label{intro_eq:fundamental_fiber_sequence_protruncated}
		\begin{tikzcd}[sep=1.5em]
			\Pietprotrun(\Xsbar) \arrow[r] & \Pietprotrun(X) \arrow[r] & \Pietprotrun(S) 
		\end{tikzcd}
	\end{equation}
	is a fiber sequence in the \category $ \ProSpc $.
\end{theorem}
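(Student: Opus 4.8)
The plan is to pass to Barwick--Glasman--Haine's profinite Galois categories and to deduce the fiber sequence from the single structural fact that a $0$-dimensional scheme has no nontrivial specializations, so that its Galois category is an $\infty$-groupoid. Write $\mathcal{G}(Y)$ for the profinite Galois category of a qcqs scheme $Y$; the theory of \cite{arXiv:1807.03281} identifies $\Pietprotrun(Y)$ with the $\infty$-groupoid completion $\lvert\mathcal{G}(Y)\rvert$, formed in $\ProSpc$ and natural in $Y$. The morphism $f$ induces $\mathcal{G}(f)\colon\fromto{\mathcal{G}(X)}{\mathcal{G}(S)}$, and the chosen geometric point $\sbar$ supplies the basepoint of the sequence.

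First I would record the base-change input for Galois categories: since $\sbar\to S$ is a geometric point, the fiber of $\mathcal{G}(f)$ over $\sbar$ is the Galois category $\mathcal{G}(\Xsbar)$ of the geometric fiber. This is the exodromy avatar of étale base change, and it identifies the would-be fiber of \eqref{intro_eq:fundamental_fiber_sequence_protruncated} \emph{before} $\infty$-groupoid completion.

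The decisive step uses the hypothesis $\dim(S)=0$. A $0$-dimensional scheme has no nontrivial specializations between its points, so its étale topos carries no higher strata and $\mathcal{G}(S)$ is a profinite \emph{$\infty$-groupoid}; equivalently $S_{\et}$ is the classifying topos of $\Pietprofin(S)$ and $\mathcal{G}(S)\simeq\Pietprotrun(S)$ already. Consequently $\mathcal{G}(f)$ is a functor to an $\infty$-groupoid, i.e.\ a local system of $\infty$-categories on $\Pietprotrun(S)$ whose value at $\sbar$ is $\mathcal{G}(\Xsbar)$, and unstraightening exhibits $\mathcal{G}(X)$ as the Grothendieck construction $\int_{\Pietprotrun(S)}\mathcal{G}(X)_{(-)}$ of this local system.

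It remains to groupoid-complete. The $\infty$-groupoid completion is a left adjoint and hence commutes with the colimit computing the Grothendieck construction over an $\infty$-groupoid; thus $\lvert\mathcal{G}(X)\rvert$ is the colimit over $b\in\Pietprotrun(S)$ of the local system $b\mapsto\lvert\mathcal{G}(X)_b\rvert$ of completed fibers, whose value at $\sbar$ is $\lvert\mathcal{G}(\Xsbar)\rvert=\Pietprotrun(\Xsbar)$. Finally, the colimit of a local system of prospaces over a base $B$ sits in a fiber sequence with the value over a chosen point as fiber, which for $B=\Pietprotrun(S)$ and basepoint $\sbar$ is precisely \eqref{intro_eq:fundamental_fiber_sequence_protruncated}. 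The main obstacle is carrying out this last step \emph{inside} $\ProSpc$: colimits and $\infty$-groupoid completions in $\Pro(\Spc)$ are delicate, so I expect the real work to lie in justifying that completion commutes with the relevant colimit and that the result is a genuine fiber sequence of prospaces --- presumably by testing against truncated coefficients so as to reduce to the classical fact that a local system of spaces over $B$ yields a fiber sequence over $B$. Note that $0$-dimensionality of $S$ is exactly what forces $\mathcal{G}(S)$ to be an $\infty$-groupoid; for positive-dimensional $S$ the strata of $S_{\et}$ obstruct this local constancy, and the shape of the fiber need not be the fiber of the shapes.
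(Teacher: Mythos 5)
Your strategy coincides with the paper's at the level of ideas --- identify $\Gal(\Xsbar)$ with the fiber of $\fromto{\Gal(X)}{\Gal(S)}$, use $\dim(S)=0$ to make $\Gal(S)$ a profinite $1$-groupoid, and then show that classifying-space formation carries this fiber sequence of procategories to a fiber sequence of prospaces --- but the proposal leaves unresolved the two points where the actual content lies, and one of them starts from a misidentification. The protruncated étale homotopy type is \emph{not} $\Bup\Gal(Y)$ formed in $\ProSpc$: the classifying space of a finite $1$-category is generally not truncated, and the correct statement is $\Pietprotrun(Y)\equivalent\protrun\Bup\Gal(Y)$. So even after producing the fiber sequence $\Bup\Gal(\Xsbar)\to\Bup\Gal(X)\to\Bup\Gal(S)$ in $\ProSpc$, you must still show it survives protruncation. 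This is a genuine step: the paper proves that $\protrun$ preserves \emph{all} limits by checking, for a cospan of honest spaces $U\to W\leftarrow V$, that $\trun_{\leq k}(U\times_W V)\to\trun_{\leq k}\bigl(\trun_{\leq n}(U)\times_{\trun_{\leq n}(W)}\trun_{\leq n}(V)\bigr)$ is an equivalence once $n\geq k+1$, using that the comparison map from $U\times_W V$ to the pullback of $n$-truncations is $(n-1)$-connected. Your phrase ``testing against truncated coefficients'' gestures at this but is not yet an argument.

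The second unresolved point is the one you yourself flag as ``the main obstacle'': straightening/unstraightening and the colimit description of the Grothendieck construction are available for a single functor of $\infty$-categories over an $\infty$-groupoid, not for a morphism of procategories, and $\Gal(X)$ is a cofiltered \emph{limit}, not a colimit, so ``groupoid completion commutes with the colimit computing the Grothendieck construction'' has nothing to attach to in $\ProCat$. The paper's substitute is to isolate exactly what is needed as the statement that the localization $\Bup\colon\fromto{\ProCat}{\ProSpc}$ is \emph{locally cartesian}, i.e.\ preserves pullbacks along maps between prospaces: at finite level this is precisely your descent fact, proved via the complete Segal space embedding and universality of geometric realizations in $\Spc$, and it passes to pro-objects by writing the cospan as a cofiltered limit of cospans $U\to W\leftarrow V$ with $U,W\in\Spc$ and $V\in\Catinfty$. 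Without this (or an equivalent device), and without the protruncation lemma above, the proof is incomplete at its technical heart. Finally, your base-change input --- that the fiber of $\Gal(f)$ over $\sbar$ is $\Gal(\Xsbar)$ --- is itself not formal, since étale topoi do not commute with general pullbacks of schemes; the paper proves it by factoring through the strict localization and using that $(-)_{\et}$ preserves cofiltered limits with affine transitions and pullbacks along closed immersions.
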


\noindent For a $ 0 $-dimensional scheme $ S $, the prospace $ \Pietprotrun(S) $ is already profinitely complete (see \Cref{ex:Piet_of_0-dimensional_is_1-truncated}).
Hence the following property of profinite completion implies that \eqref{intro_eq:fundamental_fiber_sequence_protruncated} remains a fiber sequence after profinite completion.

\begin{proposition}[(\Cref{prop:Sigma-completion_locally_cartesian})]\label{intro_prop:profinite_completion_locally_cartesian}
	The functor $(-)\profincomp \colon \fromto{\ProSpc}{\ProSpcfin} $ that carries a prospace to its profinite completion preserves pullbacks along maps between profinite spaces.
\end{proposition}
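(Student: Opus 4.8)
The plan is to reduce the statement to the case where the base $Z$ and the map $Y \to Z$ live at the level of $\pi$-finite spaces, and then to settle that case using the interplay between profinite completion, which is a left adjoint, and base change along $\pi$-finite maps, which will also turn out to be a left adjoint. Throughout, since $Y$ and $Z$ are profinite we have $Y\profincomp \simeq Y$ and $Z\profincomp \simeq Z$, so for a map $f \colon Y \to Z$ of profinite spaces and an arbitrary $X \to Z$ the assertion is that the comparison $(X \times_Z Y)\profincomp \to X\profincomp \times_Z Y$ is an equivalence in $\ProSpcfin$.

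First I record the one general fact about the completion that the reductions rest on: the functor $(-)\profincomp$ preserves cofiltered limits. This is visible from its levelwise description — writing a prospace as a formal cofiltered limit $X \simeq \lim_b X_b$ of spaces, one has $X\profincomp \simeq \lim_b (X_b)\profincomp$ — together with an iterated-limit argument. Granting this, the reduction proceeds in two steps. Using that any map of profinite spaces can be presented as a levelwise map $f \simeq \lim_a (f_a \colon Y_a \to Z_a)$ of cofiltered diagrams of $\pi$-finite spaces, and that finite limits commute with cofiltered limits, I can write $X \times_Z Y \simeq \lim_a (X \times_{Z_a} Y_a)$; applying $(-)\profincomp$ and commuting it past the cofiltered limit reduces the claim to the case $Y, Z \in \Spcfin$. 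A second, entirely analogous, cofiltered-limit argument in the variable $X$ reduces further to the case where $X$ is a single space.

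For the base case — $X$ a space and $f \colon Y \to Z$ a map of $\pi$-finite spaces — I would argue that both sides compute the same colimit. Straightening $f$ gives a functor $\mathcal Y \colon Z \to \Spcfin$ with $Y \simeq \colim_{z \in Z} \mathcal Y(z)$, and pulling back along $g \colon X \to Z$ expresses the pullback as a colimit of $\pi$-finite spaces, $X \times_Z Y \simeq \colim_{x \in X} \mathcal Y(g(x))$, computed in $\Spc$. Since $(-)\profincomp$ is a left adjoint and restricts to the identity on $\pi$-finite spaces, applying it yields $(X \times_Z Y)\profincomp \simeq \colim_{x \in X} \mathcal Y(g(x))$ in $\ProSpcfin$. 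On the other side, $X\profincomp \simeq \colim_{x \in X} \ast$ over $Z$, again because completion preserves colimits, and base change along $f$ carries $(\ast \xrightarrow{g(x)} Z)$ to $(\mathcal Y(g(x)) \to Y)$; so provided base change $f^{\ast} \colon \ProSpcfin_{/Z} \to \ProSpcfin_{/Y}$ preserves colimits, I get $X\profincomp \times_Z Y \simeq \colim_{x \in X} \mathcal Y(g(x))$ as well, and the two identifications agree.

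The crux is therefore the cocontinuity of base change along a map of $\pi$-finite spaces. Here I would use the identification $\ProSpcfin_{/W} \simeq \Pro(\Spcfin_{/W})$ for $W \in \Spcfin$, under which $f^{\ast}$ becomes $\Pro(r)$ for $r \colon \Spcfin_{/Z} \to \Spcfin_{/Y}$ the restriction functor. The restriction $r$ admits a right adjoint, the dependent product $f_{\ast}$, and the decisive finiteness input is that $f_{\ast}$ preserves $\pi$-finiteness — that is, $\Spcfin$ is locally cartesian closed — because the space of sections of a $\pi$-finite family over a $\pi$-finite base is again $\pi$-finite; this is exactly where $\pi$-finiteness of the fibers of $f$ (ultimately, $0$-dimensionality) enters. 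Since $\Pro$ carries adjunctions to adjunctions, $\Pro(r) \dashv \Pro(f_{\ast})$, so $f^{\ast}$ is a left adjoint and hence cocontinuous. I expect this last point — locating the right adjoint to base change and checking that it preserves $\pi$-finiteness — to be the main technical obstacle; the remaining steps are formal manipulations of (co)limits and adjunctions.
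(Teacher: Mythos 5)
Your proposal is correct and takes essentially the same route as the paper: reduce via cofiltered limits to the case of a cospan with $\uppi$-finite $Y,Z$ and a genuine space $X$, then use that both sides of the comparison map preserve colimits in the variable $X$ to reduce to $X = \ast$. The only difference is that where you sketch a proof that base change along a map of $\uppi$-finite spaces is cocontinuous on $\ProSpcfin$ (via local cartesian closedness of $\Spcfin$ and passing the adjunction through $\Pro$), the paper simply cites Lurie's SAG, Corollary E.6.0.8, for exactly this fact.
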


\begin{corollary}[(\Cref{cor:profinite_fundamental_fiber_sequence})]\label{intro_cor:profinite_fundamental_fiber_sequence}
	With the same notation as \Cref{intro_thm:protruncated_fundamental_fiber_sequence}, if $ \dim(S) = 0 $, then the naturally null sequence
	\begin{equation*}\label{intro_eq:fundamental_fiber_sequence_profinite}
		\begin{tikzcd}[sep=1.5em]
			\Pietprofin(\Xsbar) \arrow[r] & \Pietprofin(X) \arrow[r] & \Pietprofin(S) 
		\end{tikzcd}
	\end{equation*}
	is a fiber sequence in the \category $ \ProSpcfin$.
\end{corollary}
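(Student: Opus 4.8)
The plan is to deduce \Cref{intro_cor:profinite_fundamental_fiber_sequence} formally from the protruncated fiber sequence of \Cref{intro_thm:protruncated_fundamental_fiber_sequence} together with the exactness property of profinite completion recorded in \Cref{intro_prop:profinite_completion_locally_cartesian}. First I would rewrite the protruncated statement as the assertion that a certain square is cartesian. Since the sequence \eqref{intro_eq:fundamental_fiber_sequence_protruncated} is naturally null, the given nullhomotopy factors the composite $\Pietprotrun(\Xsbar) \to \Pietprotrun(S)$ through the basepoint $\fromto{\ast}{\Pietprotrun(S)}$ determined by the geometric point $\fromto{\sbar}{S}$; being a fiber sequence then means precisely that the comparison map $\Pietprotrun(\Xsbar) \to \Pietprotrun(X) \times_{\Pietprotrun(S)} \ast$ is an equivalence in $\ProSpc$.

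Next I would apply the profinite completion functor $(-)\profincomp$ to this pullback. The key point is that the cospan leg $\fromto{\ast}{\Pietprotrun(S)}$ is a map between profinite spaces: the point $\ast$ is profinite, and because $\dim(S) = 0$ the prospace $\Pietprotrun(S)$ is already profinitely complete by \Cref{ex:Piet_of_0-dimensional_is_1-truncated}. Hence \Cref{intro_prop:profinite_completion_locally_cartesian} applies and shows that $(-)\profincomp$ sends the square to a pullback in $\ProSpcfin$, giving an equivalence $\Pietprotrun(\Xsbar)\profincomp \simeq \Pietprotrun(X)\profincomp \times_{\Pietprotrun(S)\profincomp} \ast\profincomp$.

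It remains to identify the four terms. By definition $\Pietprotrun(\Xsbar)\profincomp \simeq \Pietprofin(\Xsbar)$ and $\Pietprotrun(X)\profincomp \simeq \Pietprofin(X)$; moreover $\ast\profincomp \simeq \ast$ and, since $\dim(S) = 0$, also $\Pietprotrun(S)\profincomp \simeq \Pietprotrun(S) \simeq \Pietprofin(S)$, both spaces being already profinite. Substituting these identifications yields $\Pietprofin(\Xsbar) \simeq \Pietprofin(X) \times_{\Pietprofin(S)} \ast$, which is exactly the claim that the profinite sequence is a fiber sequence in $\ProSpcfin$. Since the genuine mathematical content lives in the two results invoked, I do not expect a serious obstacle; the only steps needing care are verifying the hypothesis of \Cref{intro_prop:profinite_completion_locally_cartesian} --- namely that the basepoint of $\Pietprotrun(S)$ is a map between profinite spaces, which is exactly where $\dim(S) = 0$ is used --- and checking that the nullhomotopy witnessing the fiber sequence is carried along coherently by $(-)\profincomp$, which is automatic as $(-)\profincomp$ is a functor.
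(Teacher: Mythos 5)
Your proposal is correct and is essentially the argument the paper itself gives: the introduction deduces the corollary exactly as you do, from the protruncated fiber sequence, the fact that $\Pietprotrun(S)$ is already profinite when $\dim(S)=0$, and the locally cartesian property of profinite completion. The only (immaterial) difference is that the paper's formal proof applies profinite completion one step earlier, directly to the pullback square of classifying prospaces $\BGal(\Xsbar) \simeq \BGal(X) \times_{\BGal(S)} \pt$ from \Cref{cor:BGal_fundamental_fiber_sequence}, and then invokes \Cref{thm:exodromy_definition_of_Piet} to identify the terms.
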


By taking homotopy groups, \Cref{intro_cor:profinite_fundamental_fiber_sequence} recovers the fundamental exact sequence \eqref{seq:fundamental_exact_sequence}.
To explain this, we first introduce the following notation:

\begin{notation}
	Given a qcqs scheme $ Y $ and geometric point $ \fromto{\ybar}{Y} $, write $ \uppi_0(Y) $ for the profinite set of connected components of $ \Pietprofin(Y) $ and $ \pietprofin_n(Y,\ybar) $ for the $ n $-th profinite homotopy group of $ \Pietprofin(Y) $ at $ \ybar $.
	(Equivalently, $ \uppi_0(Y) $ is the profinite set of connected components of $ Y $.)
\end{notation}

\noindent Choose a geometric point $ \xbar $ of $ \Xsbar $ with image $ x $ in $ X $.
Since the higher étale homotopy groups of a $ 0 $-dimensional qcqs scheme vanish (see \Cref{ex:Piet_of_0-dimensional_is_1-truncated}), \Cref{intro_cor:profinite_fundamental_fiber_sequence} shows that the higher étale homotopy groups of $ X $ are \textit{geometric}: for $ n \geq 2 $, the natural homomorphism of profinite groups
\begin{equation*}
	\fromto{\pietprofin_n(\Xsbar,\xbar)}{\pietprofin_n(X,x)}
\end{equation*}
is an isomorphism.
Moreover, without assuming that the geometric fiber $ \Xsbar $ is connected, we obtain an exact sequence of pointed profinite sets
\begin{equation*}\label{seq:general_fundamental_exact_sequence_with_pi0}
	\begin{tikzcd}[sep=1.5em]
		1 \arrow[r] & \pietprofin_1(\Xsbar,\xbar) \arrow[r] & \pietprofin_1(X,x) \arrow[r] & \pietprofin_1(S,\sbar) \arrow[r] & \uppi_0(\Xsbar) \arrow[r] & \uppi_0(X) \arrow[r] & \uppi_0(S) \period
	\end{tikzcd}
\end{equation*}
\Cref{intro_thm:protruncated_fundamental_fiber_sequence} implies the analogous isomorphisms and exact sequence in the category of pointed prosets for the homotopy prosets of the protruncated étale homotopy types of $ \Xsbar $, $ X $, and $ S $.
See also \cite[Corollary 4.10]{arXiv:1910.14015}.

\begin{example}
	Take $ S $ to be the spectrum of a field $ k $ with separable closure $ \kbar \supset k $.
	\Cref{intro_cor:profinite_fundamental_fiber_sequence} provides a natural fiber sequence of profinite étale homotopy types
	\begin{equation*}
		\begin{tikzcd}[sep=1.5em]
			\Pietprofin(\Xkbar) \arrow[r] & \Pietprofin(X) \arrow[r] & \BGal(\kbar/k) \period
		\end{tikzcd}
	\end{equation*}
	As a consequence, the sequence of pointed profinite sets
	\begin{equation*}\label{seq:fundamental_exact_sequence_with_pi0}
		\begin{tikzcd}[sep=1.5em]
			1 \arrow[r] & \pietprofin_1(\Xkbar,\xbar) \arrow[r] & \pietprofin_1(X,x) \arrow[r] & \Gal(\kbar/k) \arrow[r] & \uppi_0(\Xkbar) \arrow[r] & \uppi_0(X) \arrow[r] & 1
		\end{tikzcd}
	\end{equation*}
	is exact.
	Since we do not make use of the fundamental exact sequence \eqref{seq:fundamental_exact_sequence}, \Cref{intro_cor:profinite_fundamental_fiber_sequence} provides a new proof of the fundamental exact sequence.
\end{example}


\subsection{Proof overview}\label{subsec:proof_overview}

To prove \Cref{intro_thm:protruncated_fundamental_fiber_sequence}, we use Barwick, Glasman, and Haine's description of the étale homotopy type in terms of profinite Galois categories \cites[Theorem 12.5.1]{arXiv:1807.03281}{arXiv:1812.11637}.
Let us briefly recall this description.
Given a qcqs scheme $ Y $, Barwick--Glasman--Haine gave the category of points of the étale topos of $ Y $ the structure of a \proobject in the category of categories with finitely many morphisms.
Since it globalizes the absolute Galois groups of the residue fields of the points of $ Y $, they denote the resulting procategory by $ \Gal(Y) $.
Using Hoyois' description of the étale homotopy type \cite[Corollary 5.6]{MR3763287} via Lurie's shape theory \cites[\HTTsubsec{7.1.6}]{HTT}[\HAsec{A.1}]{HA}[\SAGsec{E.2}]{SAG}, Barwick--Glasman--Haine showed that the prospace $ \Pietprotrun(Y) $ can be recovered as the protruncated classifying space of the procategory $ \Gal(Y) $.
See \cref{subsec:etale_homotopy_types_Galois_categories} for more details.

Via this perspective, proving \Cref{intro_thm:protruncated_fundamental_fiber_sequence} amounts to showing that a sequence of classifying prospaces is a fiber sequence.
The geometric input we need is the following:
for any morphism between qcqs schemes $ f \colon \fromto{X}{S} $ and geometric point $ \fromto{\sbar}{S} $, the sequence of profinite categories
\begin{equation*}
	\begin{tikzcd}[sep=1.5em]
		\Gal(\Xsbar) \arrow[r] & \Gal(X) \arrow[r] & \Gal(S)
	\end{tikzcd}
\end{equation*}
is a fiber sequence (see \cref{sec:Galois_categories_of_fibers}).
If $ \dim(S) = 0 $, then the profinite category $ \Gal(S) $ is already a profinite $ 1 $-groupoid.
\Cref{intro_thm:protruncated_fundamental_fiber_sequence} then follows from the assertion that taking protruncated classifying spaces preserves pullbacks along morphisms between profinite $ 1 $-groupoids. 
In \cref{sec:fiber_sequence}, we prove these categorical facts as well as \Cref{intro_prop:profinite_completion_locally_cartesian}.
See \Cref{ex:classifying_prospace_locally_cartesian,cor:protrun_classifying_space_is_localy_cartesian}.


\subsection{Related work}\label{subsec:related_work}

Let $ k $ be a field with separable closure $ \kbar \supset k $, and let $ X $ be a qcqs $ k $-scheme.
Write $ \Gk \colonequals \Gal(\kbar/k) $.
\Cref{intro_thm:protruncated_fundamental_fiber_sequence} generalizes work of Schmidt--Stix.
In the proof of \cite[Proposition 2.3]{MR3549624}, Schmidt and Stix showed that the sequence of protruncated étale homotopy types
\begin{equation*}
	\begin{tikzcd}[sep=1.5em]
		\Pietprotrun(\Xkbar) \arrow[r] & \Pietprotrun(X) \arrow[r] & \BGk
	\end{tikzcd}
\end{equation*}
is a fiber sequence, provided that $ X $ is separated, locally noetherian, and of finite type over $ k $.
Their proof uses Friedlander's description of the étale homotopy type of a locally noetherian scheme via rigid hypercovers.
It also strongly relies on the assumptions that $ X $ is of finite type and that the base is a field. 
At the time it was not known if their work implied \Cref{intro_cor:profinite_fundamental_fiber_sequence} (under these assumptions); \Cref{intro_prop:profinite_completion_locally_cartesian} shows that this is indeed the case.

\Cref{intro_cor:profinite_fundamental_fiber_sequence} generalizes work of Cox, Quick, and Chough.
Extending work of Cox over $ \RR $ \cite[Theorem 1.1]{MR534381}, and Quick for \textit{varieties} over general fields \cite[Theorem 3.5]{MR2747236}, Chough showed the natural map
\begin{equation*}
	\fromto{\Pietprofin(\Xkbar)}{\Pietprofin(X)}
\end{equation*}
realizes $ \Pietprofin(X) $ as the quotient $ \Pietprofin(\Xkbar) \modmod \Gk $ of the profinite étale homotopy type of $ \Xkbar $ by the natural $ \Gk $-action \cite[Theorem 5.1.26]{MR3553672}.
Chough's proof uses the relative étale homotopy type \cites[\S9.2.3]{MR3077173}[\S8.1]{MR3459031}.

Since Chough's thesis, Lurie proved the following: given a profinite group $ G $, there is an equivalence of \categories between profinite spaces with a continuous $ G $-action and profinite spaces with a map to the profinite classifying space $ \Bup G $ \SAG{Theorem}{E.6.5.1}.
This equivalence sends a profinite space $ U $ with $ G $-action to the quotient $ U \modmod G $ and a map of profinite spaces $ \phi \colon \fromto{V}{\Bup G} $ to the fiber $ \fib(\phi) $ over the unique point of $ \Bup G $.
In light of this dictionary, \Cref{intro_cor:profinite_fundamental_fiber_sequence} is equivalent to the presentation $ \Pietprofin(X) \equivalent \Pietprofin(\Xkbar) \modmod \Gk $.
Note that our method of proof is completely different from Chough's and works over more general bases.



\begin{acknowledgments}
	We thank Clark Barwick, Elden Elmanto, and Marc Hoyois for many enlightening discussions around the contents of this paper.
	We also thank Alexander Schmidt for comments on a draft of this work.

	We would like to thank the SFB 1085 `Higher Invariants' and the University of Regensburg for its hospitality.
	The first-named author gratefully acknowledges support from the UC President's Postdoctoral Fellowship, NSF Mathematical Sciences Postdoctoral Research Fellowship under Grant \#DMS-2102957, and a grant from the Simons Foundation (816048, LC). 
	The second-named author acknowledges support from the Deutsche Forschungsgemeinschaft (DFG) through the Collaborative Research Centre TRR 326 `Geometry and Arithmetic of Uniformized Structures',  project number 444845124.
	The third-named author was supported by the SFB 1085 `Higher Invariants' in Regensburg, funded by the DFG.
\end{acknowledgments}



\section{Background}\label{sec:etale_homotopy_types_Galois_categories}

We begin by collecting some background and notation on \proobjects, étale homotopy types, and profinite Galois categories.


\subsection{\Proobjects}\label{subsec:background_pro-objects}

In this subsection, we set our notation for \proobjects and the various completion functors relating the \categories of \proobjects relevant to this paper.
We refer the unfamiliar reader to \cite[\SAGsubsec{A.8.1}]{SAG} for more background on \proobjects, \cites[\S4.1]{arXiv:1807.03281}[\S3]{arXiv:1810.00351} for background on protruncated objects, and \cites[\SAGapp{E}]{SAG}[\S4.4]{arXiv:1807.03281} for background on profinite spaces.

\begin{notation}
	We write $ \Spc $ for the \category of spaces and $ \Catinfty $ for the \category of \categories.
\end{notation}

\begin{notation}
	Given \acategory $ \Ccal $, we write $ \Pro(\Ccal) $ for the \category of \proobjects in $ \Ccal $ obtained by formally adjoining cofiltered limits to $ \Ccal $.
	The existence of $ \Pro(\Ccal) $ is a special case of (the dual of) \HTT{Proposition}{5.3.6.2}.
	Given a functor $ F \colon \fromto{\Ccal}{\Dcal} $, we simply write $ F \colon \fromto{\Pro(\Ccal)}{\Pro(\Dcal)} $ for the cofiltered-limit-preserving extension of $ F $.
\end{notation}

\begin{nul}
	Note that an adjunction $ \adjto{L}{\Ccal}{\Dcal}{R} $ extends along cofiltered limits to an adjunction $ \adjto{L}{\Pro(\Ccal)}{\Pro(\Dcal)}{R} $.
\end{nul}

\begin{observation}\label{obs:existence_of_materalization}
	If $ \Ccal $ admits cofiltered limits, then the identity $ \fromto{\Ccal}{\Ccal} $ extends to a cofiltered-limit-preserving functor $ \lim \colon \fromto{\Pro(\Ccal)}{\Ccal} $.
	This functor sends a prosystem $ \{U_{i}\}_{i \in I} $ to the limit $ \lim_{i \in I} U_i $ computed in $ \Ccal $.
	Moreover, the functor $ \lim \colon \fromto{\Pro(\Ccal)}{\Ccal} $ is right adjoint to the Yoneda embedding $ \incto{\Ccal}{\Pro(\Ccal)} $.
\end{observation}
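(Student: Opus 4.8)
The plan is to deduce everything from the universal property of $\Pro(\Ccal)$ recorded in the preceding notation: restriction along the Yoneda embedding $j \colon \incto{\Ccal}{\Pro(\Ccal)}$ induces, for every \category $\Dcal$ that admits cofiltered limits, an equivalence between the cofiltered-limit-preserving functors $\fromto{\Pro(\Ccal)}{\Dcal}$ and all functors $\fromto{\Ccal}{\Dcal}$. This is the dual of \HTT{Proposition}{5.3.6.2}, the same input cited for the existence of $\Pro(\Ccal)$. First I would apply it with $\Dcal = \Ccal$, which admits cofiltered limits by hypothesis, to the identity functor: this produces the desired cofiltered-limit-preserving extension $\lim \colon \fromto{\Pro(\Ccal)}{\Ccal}$, together with a canonical equivalence $\lim \circ j \simeq \mathrm{id}_{\Ccal}$ witnessing that $\lim$ restricts to the identity on $\Ccal$.

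Next I would check the explicit formula. The key point is that in $\Pro(\Ccal)$ every pro-object is canonically the cofiltered limit of the diagram of its own representing objects, i.e.\ there is a natural equivalence $\{U_{i}\}_{i \in I} \simeq \lim_{i \in I} j(U_i)$ computed in $\Pro(\Ccal)$; this is precisely the content of ``formally adjoining cofiltered limits.'' Since $\lim$ preserves cofiltered limits and satisfies $\lim \circ j \simeq \mathrm{id}_{\Ccal}$, applying it yields $\lim(\{U_{i}\}) \simeq \lim_{i} \lim(j(U_i)) \simeq \lim_{i} U_i$, the latter limit now taken in $\Ccal$.

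For the adjunction, I would verify the pointwise corepresentability that feeds the standard criterion for existence of adjoints (\HTT{Proposition}{5.2.4.2}). For an object $c \in \Ccal$ and a pro-object $X = \{U_{i}\}_{i \in I}$, the mapping-space formula in a pro-category, specialized to the constant pro-object $j(c)$, gives
\[
\operatorname{Map}_{\Pro(\Ccal)}(j(c), X) \simeq \lim_{i \in I} \operatorname{Map}_{\Ccal}(c, U_i),
\]
and, because $\Ccal$ admits cofiltered limits, the universal property of the limit rewrites the right-hand side as $\operatorname{Map}_{\Ccal}(c, \lim_{i} U_i) \simeq \operatorname{Map}_{\Ccal}(c, \lim X)$. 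Thus the presheaf $c \mapsto \operatorname{Map}_{\Pro(\Ccal)}(j(c), X)$ is corepresented by $\lim X$, functorially in $X$, exhibiting $j$ as a left adjoint whose right adjoint $R$ sends $X$ to $\lim X$. To identify $R$ with the functor built above, I would note that $R$ preserves cofiltered limits (being a right adjoint) and that the unit $\fromto{\mathrm{id}_{\Ccal}}{R \circ j}$ is an equivalence because the left adjoint $j$ is fully faithful; by the uniqueness clause of the universal property of $\Pro(\Ccal)$, such a cofiltered-limit-preserving extension of $\mathrm{id}_{\Ccal}$ is unique, so $R \simeq \lim$.

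I expect the genuine obstacle to be coherence rather than computation: the pointwise equivalence of mapping spaces must be upgraded to an honest adjunction, natural in both variables, and not merely an equivalence for each fixed pair. Routing the argument through the corepresentability criterion of \HTT{Proposition}{5.2.4.2} is what keeps this clean, since that result promotes the fiberwise corepresentability established above to a functorial right adjoint without any hand construction of unit and counit or verification of the triangle identities. The only remaining point requiring attention is the usual bookkeeping around the size of $\Ccal$, which is handled exactly as in the construction of $\Pro(\Ccal)$ itself.
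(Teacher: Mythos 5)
Your proposal is correct and follows exactly the standard route the paper leaves implicit: the paper states this as an Observation with no proof, taking for granted the universal property of $\Pro(\Ccal)$ and the mapping-space formula $\operatorname{Map}_{\Pro(\Ccal)}(j(c),\{U_i\}) \simeq \lim_i \operatorname{Map}_{\Ccal}(c,U_i)$, which are precisely the two inputs you verify. Your identification of the right adjoint with the cofiltered-limit-preserving extension of the identity, via full faithfulness of $j$ and the uniqueness clause of the universal property, is sound, so there is nothing to correct.
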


We are mostly interested in (localizations of) the \categories $ \ProCat $ of \procategories and $ \ProSpc $ of prospaces.
Equivalences in $ \ProSpc $ cannot be detected on homotopy prosets; thus one wants to work with the localization of $ \ProSpc $ at the $ \uppi_{\ast} $-isomorphisms.
Since there are nontrivial prospaces with no points, instead of working with homotopy progroups, it is better to work with truncations.

\begin{notation}
	Given an integer $ n \geq 0 $, write $ \Spc_{\leq n} \subset \Spc $ for the full subcategory spanned by the $ n $-truncated spaces.
	Write $ \trun_{\leq n} \colon \fromto{\Spc}{\Spc_{\leq n}} $ for the left adjoint to the inclusion.
	Given a space $ U $ we call $ \trun_{\leq n}(U) $ the \defn{$ n $-truncation} of $ U $.

	We say that a space $ U $ is \defn{truncated} if $ U $ is $ n $-truncated for some integer $ n \geq 0 $.
	We write $ \Spctrun \subset \Spc $ for the full subcategory spanned by the truncated spaces.
\end{notation}

\begin{notation}[(protruncation)]
	The inclusion $ \ProSpctrun \subset \ProSpc $ admits a left adjoint
	\begin{equation*}
		\protrun \colon \fromto{\ProSpc}{\ProSpctrun}
	\end{equation*}
	defined as follows.
	The functor $ \protrun $ is the unique cofiltered-limit-preserving extension of the fully faithful functor $ \incto{\Spc}{\ProSpctrun} $ that sends a space $ U $ to the cofiltered diagram given by its Postnikov tower $ \{\trun_{\leq n}(U)\}_{n\geq 0} $.
	We refer to $ \ProSpctrun $ as the \category of \defn{protruncated spaces} and $ \protrun $ as the \defn{protruncation} functor.
\end{notation}

\begin{nul}\label{nul:equivalences_on_protruncations}
	Said differently, a map of prospaces $ \fromto{U}{V} $ becomes an equivalence after protruncation if and only if for each $ n \geq 0 $, the induced map of prospaces $ \fromto{\trun_{\leq n}(U)}{\trun_{\leq n}(V)} $ is an equivalence. 
\end{nul}

\begin{nul}
	By \cites[Remark 3.2]{arXiv:1810.00351}[Corollary 7.5]{MR1828474}, a map of pointed \textit{connected} prospaces $ \fromto{U}{V} $ becomes an equivalence after protruncation if and only if for each $ n \geq 1 $, the induced map of homotopy progroups $ \fromto{\uppi_n(U)}{\uppi_n(V)} $ is an isomorphism.
\end{nul}

We are also interested in \textit{profinite completions} of prospaces.

\begin{notation}[(profinite completion)]
	A space $ U $ is \defn{\pifinite} if $ U $ is truncated, $ \uppi_0(U) $ is finite, and all homotopy groups of $ U $ are finite.
	We write $ \Spcfin \subset \Spc $ for the full subcategory spanned by the \pifinite spaces.
	Again, the inclusion $ \ProSpcfin \subset \ProSpc $ admits a left adjoint
	\begin{equation*}
		(-)\profincomp \colon \fromto{\ProSpc}{\ProSpcfin} \period
	\end{equation*}
	See \SAG{Remark}{E.2.1.3}.
	We call $ \ProSpcfin $ the \category of \defn{profinite spaces} and $ (-)\profincomp $ the \defn{profinite completion} functor.
	Note that since $ \Spcfin \subset \Spctrun $, the profinite completion functor factors through $ \ProSpctrun $.
\end{notation}

We are also interested in various types of \textit{classifying spaces} for \procategories.

\begin{notation}
	We denote the left adjoint to the inclusion $ \Spc \subset \Catinfty $ by $ \Bup \colon \fromto{\Catinfty}{\Spc} $.
	Given \acategory $ \Ccal $, we call $ \Bup\Ccal $ the \defn{classifying space} of $ \Ccal $.
\end{notation}

\noindent We make use of the description of classifying spaces as geometric realizations.

\begin{recollection}\label{rec:complete_Segal_spaces}
	The nerve construction defines a fully faithful right adjoint
	\begin{equation*}
		\incto{\Catinfty}{\Fun(\Deltaop,\Spc)}
	\end{equation*}
	from the \category of \categories to the \category of simplicial spaces \cites[\HAappthm{Proposition}{A.7.10}]{HA}[\SAGsubsec{A.8.2}]{SAG}{MR2342834}[\S1]{Lurie:Goodwillie-I}{MR1804411}.
	Objects in the image of this embedding are often called \textit{complete Segal spaces}.
	Under this embedding, the subcategory $ \Spc \subset \Catinfty $ corresponds to the constant functors $ \fromto{\Deltaop}{\Spc} $.
	Moreover, the localization $ \Bup \colon \fromto{\Catinfty}{\Spc} $ is given by geometric realization.
\end{recollection}

\begin{notation}[(classifying prospaces)]
	Write $ \Btrun $ for the composite
	\begin{equation*}
		\begin{tikzcd}[sep=2em]
			\ProCat \arrow[r, "\Bup"] & \ProSpc \arrow[r, "\protrun"] & \ProSpctrun \period
		\end{tikzcd}
	\end{equation*}
	The functor $ \Btrun $ is left adjoint to the inclusion $ \ProSpctrun \subset \ProCat $.
	Given a \procategory $ \Ccal $, we refer to \smash{$ \Btrun(\Ccal) $} as the \defn{protruncated classifying space} of $ \Ccal $.
	Write \smash{$ \Bprofin $} for the composite
	\begin{equation*}
		\begin{tikzcd}[sep=2em]
			\ProCat \arrow[r, "\Bup"] & \ProSpc \arrow[r, "(-)\profincomp"] & \ProSpcfin \period
		\end{tikzcd}
	\end{equation*}
	The functor $ \Bprofin $ is left adjoint to the inclusion $ \ProSpcfin \subset \ProCat $.
	Given a \procategory $ \Ccal $, we refer to \smash{$ \Bprofin(\Ccal) $} as the \defn{profinite classifying space} of $ \Ccal $.
\end{notation}


\subsection{Étale homotopy types \& Galois categories}\label{subsec:etale_homotopy_types_Galois_categories}

We now set our conventions for étale homotopy types and their refinements to profinite Galois categories.
For background on étale homotopy types, the unfamiliar reader should refer to \cites{MR0245577}{MR3459031}[Chapter 4]{MR676809}{MR3077173}{MR3077166} for the more classical perspective and to \cites[Chapters 4 \& 11]{arXiv:1807.03281}[\S2]{arXiv:1905.06243}[\S2]{MR4367219}{MR3763287}{arXiv:1810.00351} for the more modern perspective using Lurie's shape theory.
The reader should refer to \cite[Chapter 12]{arXiv:1807.03281} for more background on profinite Galois categories.

We begin by recalling a bit about the modern interpretation of the étale homotopy type.
The point is that the original definition only made sense for locally noetherian schemes, but Lurie's shape theory allows one to define the étale homotopy type of arbitrary schemes.
We emphasize that the reader does not need to be familiar with \topoi or shape theory to understand the proofs in this paper; all of our results make use of the description of the étale homotopy type provided by \Cref{thm:exodromy_definition_of_Piet}. 

\begin{recollection}
	Let $ Y $ be a locally noetherian scheme.
	Using hypercovers, Artin and Mazur \cite[\S 9]{MR0245577} constructed a \proobject in the homotopy category of spaces called the \textit{étale homotopy type} of $ Y $.
	Friedlander \cite[\S 4]{MR676809} refined this construction, producing a \proobject in simplicial sets which he called the \textit{étale topological type} of $ Y $.
	Hoyois provided a modern interpretation of Friedlander's construction: Friedlander's étale topological type corepresentes the shape of the \topos of étale \textit{hyper}sheaves of spaces on $ Y $ \cite[Corollary 5.6]{MR3763287}.
\end{recollection}

\begin{remark}
	From the modern perspective, it is more natural to consider the shape of the \topos of étale sheaves of spaces (with no hyperdescent conditions) on $ Y $.
	This is only a minor departure from the Artin--Mazur--Friedlander étale homotopy type: by \cite[Example 4.2.8]{arXiv:1807.03281}, the protruncations of the shapes of the \topoi of étale hypersheaves and étale sheaves on $ Y $ agree.
\end{remark}

Since Lurie's shape theory makes sense for arbitrary \topoi, it provides a definition of the étale homotopy type of any scheme.

\begin{notation}
	Given a scheme $ Y $, we write $ \Piet(Y) \in \ProSpc $ for the shape of the \topos of étale sheaves of spaces on $ Y $.
	We simply refer to $ \Piet(Y) $ as the \defn{étale homotopy type} of $ Y $.
	We write $ \Pietprotrun(Y) \in \ProSpctrun $ for the protruncation of $ \Piet(Y) $, and write \smash{$ \Pietprofin(Y) $} for the profinite completion of \smash{$ \Piet(Y) $}.
\end{notation}

Now we set the context for profinite Galois categories.
To do this, we need to fix some notation and recall a bit about points in the étale topology.

\begin{notation}
	We write $ \RTop $ for the $ (2,1) $-category of topoi and (right adjoints in) geometric morphisms. 
	For a scheme $ Y $, we write $ Y_{\et} $ for the small étale topos of $ Y $.
	Given a morphism of schemes $ f \colon X \to S $, we write $ \flowerstar \colon X_{\et} \to S_{\et} $ for the induced geometric morphism of étale topoi.
\end{notation}

\begin{notation}\label{ntn:geometric_points}
	Let $ Y $ be a scheme and $ \fromto{\ybar}{Y} $ a geometric point.
	We write $ y \in Y $ for the underlying point of $ \ybar $.
\end{notation}

\begin{recollection}\label{rec:points_of_etale_topos}
	Let $ Y $ be a qcqs scheme. 
	The Grothendieck School \cite[Exposé VIII, Théorème 7.9]{MR50:7131} computed the category $ \Pt(Y_{\et}) $ of points of the étale topos of $ Y $:
	\begin{enumerate}[label=\stlabel{rec:points_of_etale_topos}, ref=\arabic*]
		\item Objects of $ \Pt(Y_{\et}) $ are geometric points $ \fromto{\ybar}{Y} $.

		\item Given geometric points $ \fromto{\sbar}{Y} $ and $ \fromto{\etabar}{Y} $ a morphism $ \fromto{\sbar}{\etabar} $ in $ \Pt(Y_{\et}) $ is an \textit{étale specialization} $ \sbar \cospecializes \etabar $: a morphism of $ Y $-schemes \smash{$ \fromto{\Spec(\Ocal_{Y,\eta}^{\sh})}{\Spec(\Ocal_{Y,s}^{\sh})} $} between spectra of strictly henselian local rings.
	\end{enumerate}
	Importantly, there is a natural isomorphism of \emph{sets}
	\begin{equation}\label{intro_eq:endomorphisms_Galois_group}
		\Hom_{\Pt(Y_{\et})}(\ybar,\ybar) \isomorphic \Gal(\upkappa(\ybar)/\upkappa(y)) \period
	\end{equation}
\end{recollection}

Barwick--Glasman--Haine gave $ \Pt(Y_{\et}) $ the structure of a \textit{profinite} category:

\begin{notation}
	We say that a $ 1 $-category $ \Ccal $ is \defn{finite} if $ \Ccal $ has finitely many objects up to isomorphism and finite $ \Hom $ sets. 
	We write $ \Cat_{1,\uppi} \subset \Catinfty $ for the full subcategory spanned by the finite $ 1 $-categories and refer to objects of $ \Pro(\Cat_{1,\uppi}) $ as \defn{profinite categories}.
\end{notation}

\begin{remark}
	Since the inclusion $ \Cat_{1,\uppi} \subset \Catinfty $ preserves finite limits, the induced inclusion $ \Pro(\Cat_{1,\uppi}) \subset \ProCat $ preserves all limits.
\end{remark}

\begin{notation}
	Given a qcqs scheme $ Y $, we write $ \Gal(Y) \in \Pro(\Cat_{1,\uppi}) $ for the \defn{profinite Galois category} of $ Y $ introduced by Barwick--Glasman--Haine \cite[Definitions 10.1.4 \& 12.1.3]{arXiv:1807.03281}.
\end{notation}

\begin{remark}\label{rem:limit_of_Gal_is_Pt}
	Like the étale homotopy type, the profinite Galois category $ \Gal(Y) $ only depends on the étale topos of $ Y $.
	Moreover, the composite
	\begin{equation*}
		\begin{tikzcd}
			\Schqcqs \arrow[r, "\Gal"] & \Pro(\Catinfty) \arrow[r, "\lim"] & \Catinfty
		\end{tikzcd}
	\end{equation*}
	is identified with the functor $ \goesto{Y}{\Pt(Y_{\et})} $.
	With this extra structure of a profinite category, the isomorphism \eqref{intro_eq:endomorphisms_Galois_group} refines to an isomorphism of \textit{profinite} sets.
	See \cite[Lemma 10.3.2 \& Construction 12.1.5]{arXiv:1807.03281}.
\end{remark}

For this article, the details of the definition of $ \Gal(Y) $ are not so important; it is only necessary to know a few of the basic properties of profinite Galois categories.
In the remainder of this subsection, we review all of the properties of profinite Galois categories used in this paper.

\begin{remark}\label{rem:limit_diagram_etale_topoi_and_Gal}
	By \Cref{rem:limit_of_Gal_is_Pt} and \cite[Definition 4.1.5 \& Theorem 10.3.3]{arXiv:1807.03281} the assignment $ \goesto{\Gal(Y)}{\Pt(Y_{\et})} $ is conservative.
	Also note that the functor $ \Pt \colon \fromto{\RTop}{\Cat_{1}} $ preserves limits.
	Therefore, given a diagram \smash{$ Y_{\bullet} \colon \fromto{I^{\smalltriangleleft}}{\Schqcqs} $}, if the induced diagram of étale topoi \smash{$ Y_{\bullet, \et} \colon \fromto{I^{\smalltriangleleft}}{\RTop} $} is a limit diagram, then so is the diagram 
	\begin{equation*}
		\Gal(Y_{\bullet}) \colon \fromto{I^{\smalltriangleleft}}{\ProCat} 
	\end{equation*}
	of profinite Galois categories.
\end{remark}

The following is immediate from the definition of the profinite Galois category.

\begin{observation}\label{obs:Gal_is_profinite_space_for_0-dim}
	Let $ Y $ be a qcqs scheme.
	Then $ \dim(Y) = 0 $ if and only if the profinite category $ \Gal(Y) $ is a profinite $ 1 $-groupoid (i.e., lies in the subcategory $ \ProSpcfin \subset \ProCat $).
\end{observation}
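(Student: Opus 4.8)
The plan is to factor the desired equivalence through a purely topological reformulation and a purely pro-categorical one, bridged by \Cref{rec:points_of_etale_topos}. Recall from there that the materialization $\lim \Gal(Y) \isomorphic \Pt(Y_{\et})$ has geometric points as objects, étale specializations as morphisms, and satisfies $\Hom_{\Pt(Y_{\et})}(\ybar,\ybar) \isomorphic \Gal(\upkappa(\ybar)/\upkappa(y))$. Accordingly, I would establish the two implications $\dim(Y) = 0 \Longleftrightarrow \Pt(Y_{\et})$ is a $1$-groupoid $\Longleftrightarrow \Gal(Y) \in \ProSpcfin$.

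For the first equivalence, recall that $\dim(Y) = 0$ holds precisely when $Y$ has no nontrivial specializations. If $\dim(Y) = 0$, then any morphism $\sbar \to \etabar$ in $\Pt(Y_{\et})$, being an étale specialization lying over a specialization of underlying points, forces $s = \eta =: y$; such a morphism is then a $Y$-map between strict henselizations of $\Ocal_{Y,y}$ attached to two separable closures of $\upkappa(y)$, hence an isomorphism, so $\Pt(Y_{\et})$ is a groupoid. Conversely, a nontrivial specialization $\eta \specializes s$ produces geometric points $\sbar,\etabar$ and a morphism $\sbar \to \etabar$ whose inverse would be an étale specialization $\etabar \cospecializes \sbar$, i.e.\ a specialization $s \specializes \eta$; since the underlying space of a scheme is sober, $\eta \specializes s$ and $s \specializes \eta$ force $\eta = s$, a contradiction. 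Thus a nontrivial specialization yields a noninvertible morphism, and $\Pt(Y_{\et})$ fails to be a groupoid.

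The second equivalence transfers the groupoid condition from the materialization to the pro-object $\Gal(Y)$ itself. One direction is formal: if $\Gal(Y) \in \ProSpcfin$, then $\Pt(Y_{\et}) \isomorphic \lim \Gal(Y)$ is a cofiltered limit of finite $1$-groupoids, computed in $\Spc \subset \Catinfty$, and hence a $1$-groupoid. For the converse I would invoke the explicit pro-presentation of $\Gal(Y)$ from \cite[Definitions 10.1.4 \& 12.1.3]{arXiv:1807.03281}: since each term of this pro-system is a finite category, a Mittag--Leffler/reindexing argument allows one to pass to a cofinal subsystem along which the structure maps, and the maps out of $\lim \Gal(Y)$, are surjective on objects and on morphisms. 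Given that every morphism of $\Pt(Y_{\et})$ is invertible, surjectivity forces every morphism of each term to be invertible, so each term is a finite groupoid, i.e.\ lies in $\Spcfin$, whence $\Gal(Y) \in \ProSpcfin$.

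I expect this second, pro-categorical step to be the main obstacle: a profinite category whose materialization is a groupoid need not be \emph{levelwise} groupoidal a priori, so either a genuine reindexing argument or direct inspection of the definition of $\Gal(Y)$ is required to conclude that $\Gal(Y)$ itself is a profinite $1$-groupoid. The topological equivalence of the first step, by contrast, is routine once the description of morphisms in $\Pt(Y_{\et})$ from \Cref{rec:points_of_etale_topos} is in hand.
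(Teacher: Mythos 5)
The paper offers no argument for this observation beyond the sentence ``immediate from the definition of the profinite Galois category'': the intended point is that in Barwick--Glasman--Haine's construction of $ \Gal(Y) $ as a pro-system of finite categories, the non-invertible morphisms of each finite quotient arise exactly from nontrivial specializations, which exist if and only if $ \dim(Y) > 0 $. Your route is genuinely different: you factor through the materialization $ \Pt(Y_{\et}) \isomorphic \lim \Gal(Y) $, using only the description of its objects and morphisms from \Cref{rec:points_of_etale_topos}. Your first equivalence is correct as written ($ \dim(Y) = 0 $ precisely rules out strict specializations; an étale specialization over a single point $ y $ is a local $ \Ocal_{Y,y} $-algebra map between strict henselizations, hence invertible; and every strict specialization of points lifts to a non-invertible étale specialization), as is the easy direction of your second equivalence. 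What your approach buys is independence from the internals of Definitions 10.1.4 and 12.1.3 of \cite{arXiv:1807.03281}; the price is the pro-categorical lemma that you correctly flag as the crux, namely that a profinite category whose materialization is a groupoid is a profinite $ 1 $-groupoid.

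On that crux: as literally stated, your reindexing claim does not go through. The image of a functor between finite categories need not be a subcategory (morphisms composable in the target need not have composable preimages), so one cannot in general arrange the transition maps, nor the maps out of $ \lim_i \Ccal_i $, to be surjective on morphisms. The lemma is nevertheless true, by a small modification: for each index $ i $ replace $ \Ccal_i $ by the subcategory $ \Ccal_i^{\infty} \subseteq \Ccal_i $ \emph{generated by} the eventual images of the objects and morphisms of the $ \Ccal_j $ for $ j \to i $. Finiteness of $ \Ccal_i $ guarantees that these eventual images are attained at a finite stage, so $ \{\Ccal_i^{\infty}\}_i $ is pro-isomorphic to $ \{\Ccal_i\}_i $, and the compactness argument for cofiltered limits of nonempty finite sets shows that every morphism lying in the eventual image lifts to a morphism of $ \lim_i \Ccal_i \isomorphic \Pt(Y_{\et}) $. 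If the latter is a groupoid, every generating morphism of $ \Ccal_i^{\infty} $ is invertible, hence so is every morphism of $ \Ccal_i^{\infty} $, and $ \Gal(Y) $ lies in $ \ProSpcfin $. With this repair (or, as you suggest, by direct inspection of the definition of $ \Gal(Y) $, which is what the paper implicitly does) your proof is complete.
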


\begin{example}
	Let $ k $ be a field.
	A choice of separable closure $ \kbar \supset k $ provides an equivalence
	\begin{equation*}
		\equivto{\Gal(\Spec(k))}{\BGal(\kbar/k)}
	\end{equation*}
	between the profinite Galois category of $ \Spec(k) $ and the $ 1 $-object profinite $ 1 $-groupoid with profinite automorphism group given by $ \Gal(\kbar/k) $.
\end{example}

A key tool we make use of is the following description of the étale homotopy type in terms of classifying prospaces:

\begin{theorem}[{\cites[Theorem 12.5.1]{arXiv:1807.03281}{arXiv:1812.11637}}]\label{thm:exodromy_definition_of_Piet}
	Let $ Y $ be a qcqs scheme.
	There are natural equivalences of prospaces
	\begin{equation*}
		\equivto{\Pietprotrun(Y)}{\Btrun(\Gal(Y))} \andeq \equivto{\Pietprofin(Y)}{\Bprofin(\Gal(Y))} \period
	\end{equation*}
\end{theorem}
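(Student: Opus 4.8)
The plan is to deduce both equivalences from a single natural identification of corepresentable functors, with the sheaf-level exodromy equivalence of Barwick--Glasman--Haine as the essential geometric input. Write $p_Y \colon Y_{\et} \to \Spc$ for the terminal geometric morphism, so that $p_Y^{\ast}$ is the constant-sheaf functor and $p_{Y\ast} = \operatorname{Map}_{Y_{\et}}(1, -)$ is global sections. By the definition of the shape, $\Piet(Y)$ corepresents the functor $K \mapsto p_{Y\ast}\,p_Y^{\ast} K$ on $\Spc$. Since $\protrun$ and $(-)\profincomp$ are left adjoint to the inclusions $\ProSpctrun \subset \ProSpc$ and $\ProSpcfin \subset \ProSpc$, the objects $\Pietprotrun(Y)$ and $\Pietprofin(Y)$ corepresent the restriction of this same functor to $\Spctrun$ and to $\Spcfin$. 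On the other side, the adjunctions recalled above --- $\Btrun$ (resp.\ $\Bprofin$) left adjoint to $\ProSpctrun \subset \ProCat$ (resp.\ $\ProSpcfin \subset \ProCat$) --- show that $\Btrun(\Gal(Y))$ and $\Bprofin(\Gal(Y))$ corepresent the functor $K \mapsto \Fun^{\mathrm{cts}}(\Gal(Y), K)$ of continuous functors out of the profinite category $\Gal(Y)$, again on $\Spctrun$ and $\Spcfin$. So it suffices to produce, naturally in $K$ and in $Y$, an equivalence
\[
	p_{Y\ast}\,p_Y^{\ast} K \;\simeq\; \Fun^{\mathrm{cts}}(\Gal(Y), K)
\]
for $K$ truncated (resp.\ $\pi$-finite).

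The key input is the exodromy equivalence \cite{arXiv:1807.03281}, which for $\mathcal{E} = \Spctrun$ or $\mathcal{E} = \Spcfin$ identifies the $\infty$-category of constructible étale sheaves on $Y$ valued in $\mathcal{E}$ with $\Fun^{\mathrm{cts}}(\Gal(Y), \mathcal{E})$. For $K \in \mathcal{E}$ the constant sheaf $p_Y^{\ast} K$ is locally constant, hence constructible, and under exodromy it corresponds to the constant functor $\mathrm{const}_K$; in particular the unit $1 = p_Y^{\ast}(\ast)$ corresponds to $\mathrm{const}_{\ast}$, the terminal object. Since $p_{Y\ast} = \operatorname{Map}_{Y_{\et}}(1, -)$, exodromy translates $p_{Y\ast}\,p_Y^{\ast} K$ into $\operatorname{Map}(\mathrm{const}_{\ast}, \mathrm{const}_K)$ computed in $\Fun^{\mathrm{cts}}(\Gal(Y), \mathcal{E})$. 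Mapping out of the terminal object is the (pro)limit of the constant diagram $\mathrm{const}_K$ over $\Gal(Y)$, which is precisely $\Fun^{\mathrm{cts}}(\Gal(Y), K)$ (equivalently $\operatorname{Map}_{\ProSpc}(\Bup\Gal(Y), K)$). This gives the required natural equivalence and hence, by corepresentability, both claimed equivalences of prospaces.

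The main obstacle is the exodromy equivalence itself together with its compatibility with pullback along $p_Y$: one needs not merely that constructible sheaves are representations of $\Gal(Y)$, but that the equivalence carries $p_Y^{\ast}$ to the formation of constant functors and the unit to the terminal object. Establishing this compatibility with the terminal geometric morphism is where the construction of $\Gal(Y)$ from the points of $Y_{\et}$, and the profinite structure carried by $\Pt(Y_{\et})$, really enter. A further point specific to the protruncated statement is that it requires exodromy with \emph{truncated} rather than merely $\pi$-finite coefficients; here it is essential to use the full profinite-category structure of $\Gal(Y)$, not just its underlying category $\Pt(Y_{\et})$, since this is exactly the topology that $\Btrun(\Gal(Y))$ must remember. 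Naturality in $Y$ would then follow from functoriality of exodromy and of the shape, in the spirit of \Cref{rem:limit_diagram_etale_topoi_and_Gal}.
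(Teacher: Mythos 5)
First, a point of comparison that is somewhat degenerate: the paper does not prove \Cref{thm:exodromy_definition_of_Piet} at all --- it is imported verbatim from Barwick--Glasman--Haine, with the bracketed citation in the theorem header standing in for a proof --- so there is no in-paper argument to measure yours against. Assessing your sketch on its own terms: the corepresentability framework is correct, and for the \emph{profinite} equivalence your outline is essentially how the cited result is extracted from exodromy. Indeed $\Pietprofin(Y)$ and $\Bprofin(\Gal(Y))$ corepresent functors on $\Spcfin$, the exodromy equivalence identifies constructible étale sheaves with \pifinite coefficients with continuous representations of $\Gal(Y)$, constant sheaves correspond to constant functors, and global sections of a constant sheaf become the limit of the constant diagram, i.e.\ $\operatorname{Map}_{\ProSpc}(\Bup\Gal(Y),K)$. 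Two steps you use silently and should record: mapping spaces in the full subcategory of constructible sheaves agree with those computed in $Y_{\et}$, and the shape in question is that of the non-hypercomplete étale topos, which is harmless only because everything is eventually protruncated.

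The genuine gap is in the protruncated half. For $K$ truncated but not \pifinite, the constant sheaf $p_Y^{\ast}K$ is \emph{not} constructible, so the exodromy equivalence --- a statement about constructible sheaves with finite, or at best \pifinite, stalks --- does not apply to it. Your closing remark that one ``requires exodromy with truncated coefficients'' names the missing theorem rather than supplying it; such a statement is not a formal consequence of the \pifinite one and is where the real content of the first displayed equivalence lives (this is precisely what the second cited reference exists to provide). Nor can you deduce the protruncated equivalence from the profinite one you have established: corepresentability on $\Spcfin$ only pins down an object of $\ProSpcfin$, i.e.\ the profinite completions, and non-equivalent protruncated prospaces can have equivalent profinite completions. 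As written, your argument establishes the second equivalence of the theorem but not the first.
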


\begin{example}\label{ex:Piet_of_0-dimensional_is_1-truncated}		
	Let $ S $ be a $ 0 $-dimensional qcqs scheme.%
	\footnote{By Serre's cohomological characterization of affineness, every $ 0 $-dimensional qcqs scheme is affine.}
	In light of \Cref{obs:Gal_is_profinite_space_for_0-dim}, \Cref{thm:exodromy_definition_of_Piet} shows that 
	\begin{equation*}
		\Pietprotrun(S) \equivalent \Gal(S) \period
	\end{equation*}
	In particular, the protruncated étale homotopy type $ \Pietprotrun(S) $ is $ 1 $-truncated and profinite.
\end{example}



\section{Galois categories of geometric fibers}\label{sec:Galois_categories_of_fibers}

In this section, we explain why the formation of étale topoi (hence Galois categories, see \Cref{rem:limit_diagram_etale_topoi_and_Gal}) commutes with taking geometric fibers (\Cref{cor:Gal_pullback_geometric_fiber}).
Since the formation of étale topoi does not preserve general pullbacks of schemes \cite[Remark 1.5]{MR4027830}, this is not immediate.
To prove this, we break the problem up into two steps: first we pull back to the strictly henselian local ring, then to the geometric point.

\begin{notation}
	Let $ S $ be a scheme and $ \sbar \to S $ a geometric point.
	We write
	\begin{equation*}
		\Loc{S}{\sbar} \colonequals \Spec(\Ocal_{S,s}^{\sh}) 
	\end{equation*}
	for the \defn{strict localization} of $ S $ at $ \sbar $.
	Given a morphism of schemes $ f \colon X \to S $, we write $ \Xsbar $ and $ \Loc{X}{\sbar} $ for the pullbacks of schemes
	\begin{equation*}
    \begin{tikzcd}[sep=2.25em]
       \Xsbar \arrow[dr, phantom, very near start, "\lrcorner", xshift=-0.25em, yshift=0.12em] \arrow[d] \arrow[r] & \Loc{X}{\sbar}  \arrow[dr, phantom, very near start, "\lrcorner", xshift=-0.25em, yshift=0.25em] \arrow[d] \arrow[r] & X \arrow[d, "f"] \\ 
       \sbar \arrow[r] & \Loc{S}{\sbar} \arrow[r] & S \period
    \end{tikzcd}
  \end{equation*}
\end{notation}

\begin{nul}
	If $ S $ is the spectrum of a field $ k $ and $ \sbar $ is the spectrum of a separable closure $ \kbar \supset k $, then 
	\begin{equation*}
		\Loc{S}{\sbar} = \Spec(\kbar) \andeq \Loc{X}{\sbar} = \Xsbar =  \Spec(\kbar) \crosslimits_{\Spec(k)} X \period
	\end{equation*}
\end{nul}

\begin{proposition}\label{prop:pullback_topos_Loc_geometric_fiber}
	Let $ f \colon \fromto{X}{S} $ be a morphism between qcqs schemes and $ \fromto{\sbar}{S} $ a geometric point.
	Then both of the squares in the diagram of étale topoi 
	\begin{equation*}
      \begin{tikzcd}[sep=2.25em]
	       \Xsbaret \arrow[d] \arrow[r] & \Locet{X}{\sbar} \arrow[d] \arrow[r] & X_{\et} \arrow[d, "\flowerstar"] \\ 
	       \sbar_{\et} \arrow[r] & \Locet{S}{\sbar} \arrow[r] & S_{\et}
      \end{tikzcd}
    \end{equation*}
    are pullback squares in $ \RTop $.
\end{proposition}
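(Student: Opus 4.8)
The plan is to prove the two squares separately, as each realizes the étale topos functor on a base change where it is known to behave well: the right-hand square is a base change along a cofiltered limit of étale morphisms, while the left-hand square is a base change along the closed point of a strictly henselian local scheme.

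\emph{The right-hand square.} Write $ T \colonequals \Loc{S}{\sbar} $. The strict localization is the cofiltered limit $ T \equivalent \lim_{U} U $ taken over the cofiltered category of affine étale neighborhoods $ U $ of $ \sbar $ in $ S $; since these are morphisms of affine schemes, the transition maps are affine. First I would invoke the continuity of the étale topos under cofiltered limits of qcqs schemes with affine transition maps to obtain
\begin{equation*}
	\Locet{S}{\sbar} \equivalent \lim_{U} U_{\et} \andeq \Locet{X}{\sbar} \equivalent \lim_{U} (U \times_S X)_{\et} \period
\end{equation*}
Here the second equivalence also uses $ \Loc{X}{\sbar} \equivalent \lim_{U} (U \times_S X) $, which holds because limits of schemes commute with the pullback along $ f $. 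Next, for each such $ U $ the étale $ S $-scheme $ U $ represents an object of $ S_{\et} $ with slice topos $ (S_{\et})_{/U} \equivalent U_{\et} $, and likewise $ (X_{\et})_{/U \times_S X} \equivalent (U \times_S X)_{\et} $; since slice topoi pull back to slice topoi along any geometric morphism, each square
\begin{equation*}
	\begin{tikzcd}[sep=2em]
		(U \times_S X)_{\et} \arrow[r] \arrow[d] & X_{\et} \arrow[d, "\flowerstar"] \\
		U_{\et} \arrow[r] & S_{\et}
	\end{tikzcd}
\end{equation*}
is a pullback in $ \RTop $. Finally, pullbacks commute with cofiltered limits, so passing to the limit over $ U $ identifies $ \Locet{X}{\sbar} $ with $ \Locet{S}{\sbar} \times_{S_{\et}} X_{\et} $.

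\emph{The left-hand square.} Here a limit of étale morphisms is unavailable: setting $ A \colonequals \Ocal_{S,s}^{\sh} $, the map $ \sbar \to T = \Spec(A) $ is the closed immersion onto the closed point, and $ \sbar $ admits no nontrivial étale neighborhoods in $ T $. Instead I would use that $ T $ is strictly henselian local: every étale neighborhood of its closed point has a section, so the stalk functor at $ \sbar $ agrees with global sections. Thus $ \sbar $ is the focal point of the local topos $ T_{\et} $, with $ \sbar_{\et} \equivalent \Spc $. Writing $ X_T \colonequals \Loc{X}{\sbar} $, the content of the square is that pulling $ (X_T)_{\et} $ back along this focal point recovers the étale topos of the scheme-theoretic fiber $ \Xsbar = X_T \times_T \sbar $. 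The strategy is to test on generators: an étale $ X_T $-scheme $ W $ yields the slice $ (X_T)_{\et,/W} \equivalent W_{\et} $, whose pullback along $ \sbar $ is $ \Spc_{/W_{\sbar}} $ with $ W_{\sbar} $ the stalk of $ W $ at $ \sbar $; one then checks that $ W \mapsto W_{\sbar} $ identifies the fiber over $ \sbar $ of the small étale site of $ X_T $ with the small étale site of $ \Xsbar $, so that the two topoi coincide.

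This last identification is the step I expect to be the main obstacle. Because the closed immersion $ \Xsbar \hookrightarrow X_T $ is far from a universal homeomorphism, topological invariance of the étale site does not apply, and the strictly henselian hypothesis on $ A $ must enter essentially: it is what guarantees that, étale-locally, every étale $ \Xsbar $-scheme arises as the fiber of an étale $ X_T $-scheme and that the resulting comparison is fully faithful. I would isolate this as a separate lemma on the closed point of a strictly henselian local base, reducing by a further continuity argument to the case where $ X_T $ is of finite presentation over $ T $, where the requisite lifting of étale schemes across the closed fiber becomes available. Granting the two squares, the outer rectangle — the statement that the étale topos of the geometric fiber is the full base change along $ \sbar \to S $ — follows by composition.
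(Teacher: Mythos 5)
Your treatment of the right-hand square is correct and is essentially the paper's argument: write the strict localization as a cofiltered limit of affine étale neighborhoods, use continuity of $ (-)_{\et} $ along cofiltered limits of qcqs schemes with affine transition maps, use that étale base change corresponds to slicing of topoi (the paper phrases this as ``$ (-)_{\et} $ preserves pullbacks along étale morphisms''), and then commute the cofiltered limit past the pullback.

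The left-hand square is where your proposal has a genuine gap, and you flag it yourself: the identification of the fiber over $ \sbar $ of the small étale site of $ \Loc{X}{\sbar} $ with the small étale site of $ \Xsbar $ is precisely the content of the statement, and you leave it as a lemma that you ``expect to be the main obstacle,'' together with a plan (reduction to finite presentation, lifting of étale schemes across the closed fiber) rather than a proof. The paper closes this square in one line by invoking a general theorem: the functor $ (-)_{\et} $ preserves pullbacks along \emph{closed immersions} (SGA 4, Exposé VIII, Théorème 6.3; Milne, Chapter II, Theorem 3.1; Lurie, SAG Proposition 3.1.4.1 combined with HTT Proposition 7.3.2.12 on pullbacks of closed immersions of topoi). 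In particular, your expectation that ``the strictly henselian hypothesis on $ A $ must enter essentially'' at this stage is misplaced: the henselian structure has already done its work in the right-hand square, and the left-hand square uses only that $ \sbar \to \Loc{S}{\sbar} $ is a closed immersion. The ingredients of your sketch --- that étale morphisms lift locally across closed immersions, and that pushforward along a closed immersion is fully faithful with image the sheaves supported on the closed subscheme --- are indeed the ingredients of that general theorem, so your plan points in the right direction; but as written the argument is not complete, and the cleanest repair is simply to cite the closed-immersion base change theorem for étale topoi.
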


\begin{proof}
	First we prove that the right-hand square is a pullback.
	Recall that the strict localization $ \Loc{S}{\sbar} $ is isomorphic (over $ S $) to the limit $ \lim_{U \in \Nbd(\sbar)} U $ over the cofiltered system $ \Nbd(\sbar) $ of \textit{affine} étale neighborhoods of $ \sbar $ in $ S $ \cite[Exposé VIII, 4.5]{MR50:7131}.
	Hence
	\begin{equation*}
		\Loc{X}{\sbar} \isomorphic \lim_{U \in \Nbd(\sbar)} U \cross_{S} X \period
	\end{equation*}
	Since the functor $ (-)_{\et} \colon \Sch \to \RTop $ preserves limits of cofiltered diagrams of qcqs schemes with affine transition morphisms \cites[Exposé VII, Lemme 5.6]{MR50:7131}[Lemma 3.3]{MR4296353} as well as pullbacks along étale morphisms, we see that
	\begin{equation*}
			\Locet{X}{\sbar} \equivalent \lim_{U \in \Nbd(\sbar)} U_{\et} \crosslimits_{S_{\et}} X_{\et} \equivalent S_{(\sbar),\et} \crosslimits_{S_{\et}} X_{\et} \period
	\end{equation*}

	To see that the left-hand square is a pullback, note that the morphism of schemes $ \fromto{\sbar}{\Loc{S}{\sbar}} $ is a closed immersion and the functor $ (-)_{\et} \colon \Sch \to \RTop $ preserves pullbacks along closed immersions.
	See \cites[\HTTthm{Proposition}{7.3.2.12}]{HTT}[\SAGthm{Proposition}{3.1.4.1}]{SAG}[Exposé VIII, Théorème 6.3]{MR50:7131}[Chapter II, Theorem 3.1]{MR559531}.
\end{proof}

Since $ \sbar $ is the spectrum of a separably closed field, we have $ \Gal(\sbar) \equivalent \pt $.
In light of \Cref{rem:limit_diagram_etale_topoi_and_Gal}, \Cref{prop:pullback_topos_Loc_geometric_fiber} implies:

\begin{corollary}\label{cor:Gal_pullback_geometric_fiber}
	Let $ f \colon \fromto{X}{S} $ be a morphism between qcqs schemes and $ \fromto{\sbar}{S} $ a geometric point.
	Then both of the squares in the diagram
	\begin{equation*}
    \begin{tikzcd}[sep=2.25em]
       \Gal(\Xsbar) \arrow[d] \arrow[r] & \Gal(\Loc{X}{\sbar}) \arrow[d] \arrow[r] & \Gal(X) \arrow[d] \\
       \pt \arrow[r] & \Gal(\Loc{S}{\sbar}) \arrow[r] & \Gal(S)
    \end{tikzcd}
  \end{equation*}
  are pullback squares in $ \ProCat $.
\end{corollary}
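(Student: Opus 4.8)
The plan is to deduce both claims purely formally from the topos-level statement, using the transfer principle of \Cref{rem:limit_diagram_etale_topoi_and_Gal}. Recall that a pullback square is precisely a limit diagram indexed by $I^{\smalltriangleleft}$, where $I$ is the cospan category $\bullet \to \bullet \leftarrow \bullet$ and the adjoined cone point records the apex of the pullback. With this observation, each of the two squares in the statement is an instance of the situation governed by \Cref{rem:limit_diagram_etale_topoi_and_Gal}.

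First I would read off from \Cref{prop:pullback_topos_Loc_geometric_fiber} that each of the two squares of étale topoi is a limit diagram in $\RTop$: the right-hand square presents $\Locet{X}{\sbar}$ as the limit of the cospan $\Locet{S}{\sbar} \to S_{\et} \leftarrow X_{\et}$, and the left-hand square presents $\Xsbaret$ as the limit of the cospan $\sbar_{\et} \to \Locet{S}{\sbar} \leftarrow \Locet{X}{\sbar}$. Next I would apply \Cref{rem:limit_diagram_etale_topoi_and_Gal} separately to each of these cospans. Since that remark sends any diagram of qcqs schemes whose induced diagram of étale topoi is a limit diagram to a limit diagram of profinite Galois categories, it immediately yields that the two induced squares
\begin{equation*}
	\Gal(\Xsbar) \to \Gal(\Loc{X}{\sbar}) \to \Gal(X) \andeq \pt \to \Gal(\Loc{S}{\sbar}) \to \Gal(S)
\end{equation*}
are limit diagrams in $\ProCat$, i.e., pullback squares. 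For the left-hand square I would additionally invoke the identification $\Gal(\sbar) \equivalent \pt$, valid because $\sbar$ is the spectrum of a separably closed field, to recognize the bottom-left corner as the terminal object appearing in the displayed diagram.

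There is essentially no obstacle remaining at this stage: all of the genuinely geometric work — namely that forming étale topoi commutes with these two particular pullbacks, even though it fails for general pullbacks of schemes — has already been carried out in \Cref{prop:pullback_topos_Loc_geometric_fiber}, and the passage from étale topoi to profinite Galois categories is exactly the formal statement of \Cref{rem:limit_diagram_etale_topoi_and_Gal}. The only point deserving a second glance is that \Cref{rem:limit_diagram_etale_topoi_and_Gal} must be applied with a cospan-shaped index category $I$; but that remark is stated for an arbitrary index category, so there is nothing further to check.
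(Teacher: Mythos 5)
Your argument is correct and is essentially identical to the paper's own (very brief) proof: the paper likewise deduces the corollary by applying \Cref{rem:limit_diagram_etale_topoi_and_Gal} to the two pullback squares of étale topoi established in \Cref{prop:pullback_topos_Loc_geometric_fiber}, together with the identification $\Gal(\sbar) \equivalent \pt$ for the spectrum of a separably closed field.
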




\section{The fundamental fiber sequence}\label{sec:fiber_sequence}

Let $ f \colon \fromto{X}{S} $ be a morphism between qcqs schemes and $ \fromto{\sbar}{S} $ a geometric point.
We have seen that there is a fiber sequence of profinite categories
\begin{equation*}
	\begin{tikzcd}[sep=1.5em]
		\Gal(\Xsbar) \arrow[r] & \Gal(X) \arrow[r] & \Gal(S) \period
	\end{tikzcd}
\end{equation*}
Our goal is to show that if $ \dim(S) = 0 $, then this fiber sequence remains a fiber sequence after applying the localizations
\begin{equation*}
	\Btrun \colon \fromto{\ProCat}{\ProSpctrun} \andeq \Bprofin \colon \fromto{\ProCat}{\ProSpcfin} \period
\end{equation*}
Since the functors $ \Btrun $ and $ \Bprofin $ do not generally preserve fibers, this is not immediate from the definitions.
Instead, the main technical results of this section are that these localizations preserve pullbacks along morphisms between profinite spaces in the following sense:

\begin{recollection}
	Let $ \Ccal $ be \acategory with pullbacks and $ \Dcal \subset \Ccal $ a full subcategory such that the inclusion admits a left adjoint $ L \colon \fromto{\Ccal}{\Dcal} $.
	We say that the localization $ L $ is \textit{locally cartesian} if for any cospan $ U \to W \ot V $ in $ \Ccal $ with $ U,W \in \Dcal $, the natural map
	\begin{equation*}
		\fromto{L(U \cross_W V)}{U \cross_W L(V)}
	\end{equation*}
	is an equivalence.
	See \cites[\S1.2]{MR3641669}[\S3.2]{MR3570135}.
\end{recollection}

In \cref{subsec:local_cartesianness_classifying_prospaces}, we explain why the classifying prospaces functor $ \Bup \colon \fromto{\ProCat}{\ProSpc} $ is locally cartesian (\Cref{ex:classifying_prospace_locally_cartesian}).
From this and \Cref{cor:Gal_pullback_geometric_fiber}, we deduce a version of the fundamental fiber sequence for classifying prospaces of Galois categories (\Cref{cor:BGal_fundamental_fiber_sequence}).
\Cref{subsec:local_cartesianness_protruncated} shows that the protruncation functor $ \protrun \colon \fromto{\ProSpc}{\ProSpctrun} $ actually preserves all limits (\Cref{prop:protruncation_preserves_limits}).
From this we deduce the fundamental fiber sequence for protruncated étale homotopy types (\Cref{cor:protruncated_fundamental_fiber_sequence}).
In \cref{subsec:local_cartesianness_profinite_completion}, we show that the profinite completion functor is locally cartesian (\Cref{prop:Sigma-completion_locally_cartesian}) and deduce the fundamental fiber sequence for profinite étale homotopy types (\Cref{cor:profinite_fundamental_fiber_sequence}).


\subsection{Local cartesianness of classifying prospaces}\label{subsec:local_cartesianness_classifying_prospaces}

We now show that the localization $ \Bup \colon \fromto{\ProCat}{\ProSpc} $ is locally cartesian.
Using the embedding of $ \Catinfty $ into simplicial spaces, we first treat the localization $ \Bup \colon \fromto{\Catinfty}{\Spc} $.

\begin{nul}
	Let $ \Ical $ be a weakly contractible \category (e.g., $ \Ical = \Deltaop $).
	Let $ \Ccal $ be \acategory with $ \Ical $-shaped colimits.
	Since $ \Ical $ is weakly contractible, the constant functor $ \fromto{\Ccal}{\Fun(\Ical,\Ccal)} $ is fully faithful.
	Hence its left adjoint $ \colim_\Ical \colon \fromto{\Fun(\Ical,\Ccal)}{\Ccal} $ is a localization.
\end{nul}

\noindent The following is a direct reformulation of the definitions.

\begin{lemma}\label{lem:locally_cartesian_via_universal_colimits}
	Let $ \Ical $ be a weakly contractible \category and let $ \Ccal $ be \acategory with $ \Ical $-shaped colimits and pullbacks.
	Then the following are equivalent:
	\begin{enumerate}[label=\stlabel{lem:locally_cartesian_via_universal_colimits}]
		\item $ \Ical $-shaped colimits are universal in the \category $ \Ccal $.

		\item The localization $ \colim_\Ical \colon \fromto{\Fun(\Ical,\Ccal)}{\Ccal} $ is locally cartesian.
	\end{enumerate}
\end{lemma}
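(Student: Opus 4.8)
The plan is to unwind both statements and observe that they assert literally the same thing, which is why the lemma is a reformulation of the definitions rather than a genuine theorem. First I would fix the setup: identify $ \Ccal $ with the full subcategory of constant diagrams in $ \Fun(\Ical,\Ccal) $ via the constant-diagram functor, whose left adjoint $ \colim_\Ical $ is the localization under consideration (a localization precisely because $ \Ical $ is weakly contractible, as recorded just above). The constant-diagram functor is both right adjoint to $ \colim_\Ical $ and left adjoint to $ \lim_\Ical $, so the constant diagrams are closed under limits in $ \Fun(\Ical,\Ccal) $; combined with the fact that limits in $ \Fun(\Ical,\Ccal) $ are computed objectwise, this lets me pass freely between pullbacks formed in $ \Fun(\Ical,\Ccal) $ and pullbacks formed in $ \Ccal $.

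Next I would unwind local cartesianness. A cospan $ U \to W \ot V $ in $ \Fun(\Ical,\Ccal) $ with $ U,W $ in the reflective subcategory amounts to a choice of objects $ u,w \in \Ccal $ (with $ U,W $ the constant diagrams on them), a morphism $ \fromto{u}{w} $, and a diagram $ V \colon \fromto{\Ical}{\Ccal} $ equipped with a natural transformation $ \fromto{V}{W} $, i.e.\ a diagram in the slice $ \Ccal_{/w} $. Since pullbacks in $ \Fun(\Ical,\Ccal) $ are objectwise, $ U \cross_W V $ is the diagram $ i \mapsto u \cross_w V(i) $, so that $ \colim_\Ical(U \cross_W V) \equivalent \colim_{i \in \Ical}(u \cross_w V(i)) $. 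On the other side, $ \colim_\Ical(V) \equivalent \colim_{i \in \Ical} V(i) $, and the pullback $ U \cross_W \colim_\Ical(V) $, taken among constant diagrams, is the constant diagram on $ u \cross_w \colim_{i \in \Ical} V(i) $. Under these identifications the comparison map from the definition becomes the canonical base-change map
\[
	\colim_{i \in \Ical} \bigl( u \cross_w V(i) \bigr) \longrightarrow u \cross_w \colim_{i \in \Ical} V(i) \period
\]

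Finally, requiring this map to be an equivalence for every such cospan is, after the translation above, exactly the requirement that for every morphism $ \fromto{u}{w} $ the base-change functor $ \fromto{\Ccal_{/w}}{\Ccal_{/u}} $ preserve $ \Ical $-shaped colimits, i.e.\ that $ \Ical $-shaped colimits be universal in $ \Ccal $. The quantifiers match on the nose: $ U,W $ range over all pairs of objects with a morphism between them, and $ V $ over all diagrams over $ w $. Hence the two conditions are equivalent. The only real work, and so the main (if minor) obstacle, is the bookkeeping in the previous step: verifying that the natural transformation appearing in the definition of a locally cartesian localization genuinely coincides with the canonical base-change comparison map, and that the two a priori different pullbacks — one formed in $ \Fun(\Ical,\Ccal) $, the other in the reflective subcategory — are computed as claimed. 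Both follow from the objectwise computation of limits in functor categories together with the closure of constant diagrams under limits.
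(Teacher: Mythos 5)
Your proposal is correct and matches the paper's treatment: the paper offers no written proof beyond the remark that the lemma is ``a direct reformulation of the definitions,'' and your unwinding — identifying a cospan with constant $U,W$ with a map $u \to w$ plus a diagram in $\Ccal_{/w}$, computing the pullback objectwise, and recognizing the comparison map as the base-change map $\colim_i(u \cross_w V(i)) \to u \cross_w \colim_i V(i)$ — is exactly the verification being left implicit. (One tiny quibble: closure of the constant diagrams under pullbacks already follows from reflectivity, i.e.\ from the constant functor being right adjoint to $\colim_\Ical$; you do not need $\lim_\Ical$ to exist.)
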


\begin{example}\label{ex:classifying_space_locally_cartesain}
	Since geometric realizations of simplicial objects are universal in $ \Spc $, in light of \Cref{rec:complete_Segal_spaces}, the localization $ \Bup \colon \fromto{\Catinfty}{\Spc} $ is locally cartesian.
\end{example}

Now note that passing to \proobjects preserves locally cartesian localizations. 

\begin{lemma}\label{lem:Pro_preserves_local_cartesianness}
	Let $ \Ccal $ be \acategory with pullbacks and let $ L \colon \fromto{\Ccal}{\Dcal} $ be a locally cartesian localization.
	Then the induced localization $ L \colon \fromto{\Pro(\Ccal)}{\Pro(\Dcal)} $ is locally cartesian.
\end{lemma}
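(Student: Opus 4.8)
The plan is to deduce the pro-statement from the hypothesis on $ \Ccal $ by a levelwise argument, so the proof rests on two inputs: a formal one and the level representation of diagrams in a pro-category. For the formal input, recall that since $ \Ccal $ has pullbacks the Yoneda embedding $ \Ccal \hookrightarrow \Pro(\Ccal) $ preserves pullbacks, that $ L \colon \Pro(\Ccal) \to \Pro(\Dcal) $ preserves cofiltered limits by construction, and that forming pullbacks commutes with cofiltered limits (as limits commute with limits). Fix a cospan $ \mathcal{U} \to \mathcal{W} \leftarrow \mathcal{V} $ in $ \Pro(\Ccal) $ with $ \mathcal{U}, \mathcal{W} \in \Pro(\Dcal) $; I must show that the comparison map $ L(\mathcal{U} \times_{\mathcal{W}} \mathcal{V}) \to \mathcal{U} \times_{\mathcal{W}} L(\mathcal{V}) $ is an equivalence.

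The key step is to choose a good level representation. Writing $ \Lambda = (1 \to 0 \leftarrow 2) $ for the cospan shape, I would represent the cospan as a pointwise cofiltered limit
\[
(\mathcal{U} \to \mathcal{W} \leftarrow \mathcal{V}) \;\simeq\; \lim_{k \in K} (U_k \to W_k \leftarrow V_k)
\]
of a diagram $ K \to \Fun(\Lambda, \Ccal) $, in which the two distinguished legs take values in $ \Dcal $, i.e. $ U_k, W_k \in \Dcal $ while $ V_k \in \Ccal $ is arbitrary. Such a representation exists by the level representation of finite diagrams in a pro-category, combined with the fact that $ \mathcal{U}, \mathcal{W} \in \Pro(\Dcal) $ admit presentations by cofiltered diagrams valued in $ \Dcal $.

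Granting this, the remainder is a formal interchange of limits. Because $ \Lambda $-limits commute with the cofiltered limit over $ K $, and $ \Ccal \hookrightarrow \Pro(\Ccal) $ preserves pullbacks, one has $ \mathcal{U} \times_{\mathcal{W}} \mathcal{V} \simeq \lim_{k}(U_k \times_{W_k} V_k) $, each pullback formed in $ \Ccal $. Applying $ L $ and commuting it past the cofiltered limit gives $ L(\mathcal{U} \times_{\mathcal{W}} \mathcal{V}) \simeq \lim_{k} L(U_k \times_{W_k} V_k) $. Now the hypothesis that $ L $ is locally cartesian applies to each cospan $ U_k \to W_k \leftarrow V_k $, since $ U_k, W_k \in \Dcal $, giving $ L(U_k \times_{W_k} V_k) \simeq U_k \times_{W_k} L(V_k) $. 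Reassembling via the same interchange, together with $ L(\mathcal{V}) \simeq \lim_k L(V_k) $, yields $ \lim_{k}(U_k \times_{W_k} L(V_k)) \simeq \mathcal{U} \times_{\mathcal{W}} L(\mathcal{V}) $. Since the global comparison map is, by naturality, the cofiltered limit of the levelwise comparison maps — which are equivalences — it is itself an equivalence.

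I expect the only genuine difficulty to lie in the constrained level representation: an arbitrary level representation of the cospan produces legs valued in $ \Ccal $, and one must refine it so that the two distinguished legs land in $ \Dcal $ rather than merely in $ \Pro(\Dcal) $. This is not a formal consequence of the level representation equivalence $ \Pro(\Fun(\Lambda, \Ccal)) \simeq \Fun(\Lambda, \Pro(\Ccal)) $; it requires a cofinality and reindexing argument, powered by the full faithfulness of $ \Pro(\Dcal) \hookrightarrow \Pro(\Ccal) $, that replaces the $ \mathcal{U} $- and $ \mathcal{W} $-legs by $ \Dcal $-valued systems while carrying along the structure maps of the cospan. Once this reindexing is in place, every other step is the commutation of limits with limits together with the preservation properties recorded at the outset.
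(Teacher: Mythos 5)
Your proposal is correct and follows essentially the same route as the paper: reduce, via a level representation of the cospan with the $\mathcal{U}$- and $\mathcal{W}$-legs valued in $\Dcal$ and the fact that $L$ preserves cofiltered limits, to the case $U, W \in \Dcal$, $V \in \Ccal$, where the hypothesis applies directly. The paper states this reduction in one line; you have merely spelled out the reindexing it tacitly uses.
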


\begin{proof}
	We need to show that given a cospan $ U \to W \ot V $ in $ \Pro(\Ccal) $ with $ U, W \in \Pro(\Dcal) $, the natural morphism
	\begin{equation}\label{eq:L_pullback_comparison}
		\fromto{L(U \cross_W V)}{U \cross_W L(V)}
	\end{equation}
	is an equivalence in $ \Pro(\Dcal) $.
	Since $ L \colon \fromto{\Pro(\Ccal)}{\Pro(\Dcal)} $ preserves cofiltered limits, it suffices to prove that \eqref{eq:L_pullback_comparison} is an equivalence in the special case that $ U,W \in \Dcal $ and $ V \in \Ccal $.
	This now follows from the assumption that the localization $ L \colon \fromto{\Ccal}{\Dcal} $ is locally cartesian.
\end{proof}

\begin{example}\label{ex:classifying_prospace_locally_cartesian}
	The localization $ \Bup \colon \fromto{\ProCat}{\ProSpc} $ is locally cartesian.
\end{example}

\begin{corollary}\label{cor:BGal_fundamental_fiber_sequence}
	Let $ f \colon \fromto{X}{S} $ be a morphism between qcqs schemes, and let $ \fromto{\sbar}{S} $ be a geometric point of $ S $.
	If $ \dim(S) = 0 $, then the natural square 
	\begin{equation*}
		\begin{tikzcd}
			\BGal(\Xsbar) \arrow[r] \arrow[d] & \BGal(X) \arrow[d] \\ 
			\pt \arrow[r] & \BGal(S) 
		\end{tikzcd}
	\end{equation*}
	is a pullback square in the \category $ \ProSpc $.
\end{corollary}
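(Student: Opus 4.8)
The plan is to deduce the statement by feeding the pullback of profinite categories supplied by \Cref{cor:Gal_pullback_geometric_fiber} into the local cartesianness of the classifying prospace functor established in \Cref{ex:classifying_prospace_locally_cartesian}, using the hypothesis $ \dim(S) = 0 $ only to guarantee that the relevant corners of the pullback square already lie in the subcategory $ \ProSpc \subseteq \ProCat $ on which $ \Bup $ acts trivially. First I would record what $ \dim(S) = 0 $ buys us: by \Cref{obs:Gal_is_profinite_space_for_0-dim} the profinite category $ \Gal(S) $ is a profinite $ 1 $-groupoid, so $ \Gal(S) \in \ProSpcfin \subseteq \ProSpc $, and of course $ \pt \in \ProSpc $ as well. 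Since $ \Bup \colon \ProCat \to \ProSpc $ is the left adjoint to the inclusion $ \ProSpc \subseteq \ProCat $, it restricts to the identity on $ \ProSpc $; in particular $ \BGal(S) \equivalent \Gal(S) $ and $ \Bup(\pt) \equivalent \pt $.

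Next I would invoke \Cref{cor:Gal_pullback_geometric_fiber}. Since $ \Gal(\sbar) \equivalent \pt $, pasting its two pullback squares shows that the outer rectangle is a pullback in $ \ProCat $, i.e.
\[
	\Gal(\Xsbar) \equivalent \pt \cross_{\Gal(S)} \Gal(X) \period
\]
Now I would apply local cartesianness of $ \Bup $ to the cospan $ \pt \to \Gal(S) \ot \Gal(X) $, whose first two terms lie in $ \ProSpc $ by the previous step while $ \Gal(X) $ is allowed to be an arbitrary profinite category. This yields a natural equivalence
\[
	\BGal(\Xsbar) \equivalent \Bup\bigl(\pt \cross_{\Gal(S)} \Gal(X)\bigr) \equivalent \pt \cross_{\Gal(S)} \Bup(\Gal(X)) \equivalent \pt \cross_{\BGal(S)} \BGal(X) \comma
\]
where the last identification uses $ \BGal(S) \equivalent \Gal(S) $ from the first step. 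Because the inclusion $ \ProSpc \subseteq \ProCat $ is a right adjoint and hence preserves pullbacks, this right-hand pullback may be computed in $ \ProSpc $, which is exactly the pullback appearing in the statement. It then remains to check that the equivalence just produced is the comparison map induced by the square in the corollary, which follows from the naturality of the local-cartesianness comparison morphism together with the naturality of the equivalences of \Cref{cor:Gal_pullback_geometric_fiber}.

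Since the genuinely substantive inputs — the commutation of Galois categories with geometric fibers and the local cartesianness of $ \Bup $ — are already in hand, there is no serious obstacle; the only thing requiring care is the bookkeeping that matches the hypotheses of local cartesianness. The definition demands that both the source and the target of one leg of the cospan lie in $ \ProSpc $, and the entire role of the assumption $ \dim(S) = 0 $ is to place $ \Gal(S) $ (and trivially $ \pt $) there while imposing no condition on $ X $. I would therefore take care to confirm both the naturality identification of the comparison map and that the pullback taken in $ \ProSpc $ agrees with the one taken in $ \ProCat $, so that the corollary is literally a statement about a pullback square in $ \ProSpc $.
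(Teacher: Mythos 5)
Your proposal is correct and follows essentially the same route as the paper: the paper's proof likewise notes that $\dim(S)=0$ makes $\Gal(S)$ a profinite space via \Cref{obs:Gal_is_profinite_space_for_0-dim} and then applies the locally cartesian localization $\Bup \colon \fromto{\ProCat}{\ProSpc}$ to the outer pullback square of \Cref{cor:Gal_pullback_geometric_fiber}. Your additional bookkeeping (identifying $\BGal(S)\equivalent\Gal(S)$, checking the cospan hypotheses, and matching the comparison map) is just a more explicit rendering of the same two-line argument.
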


\begin{proof}
	Since $ \dim(S) = 0 $, the profinite category $ \Gal(S) $ is a profinite space (\Cref{obs:Gal_is_profinite_space_for_0-dim}).
	The claim follows by applying the locally cartesian localization $ \Bup \colon \fromto{\ProCat}{\ProSpc} $
	to the large pullback square
	appearing in \Cref{cor:Gal_pullback_geometric_fiber}.
\end{proof}


\subsection{Local cartesianness of protruncated classifying spaces}\label{subsec:local_cartesianness_protruncated}

In this subsection, we prove that the protruncation functor preserves all limits and deduce the fundamental fiber sequence for protruncated étale homotopy types (\Cref{cor:protruncated_fundamental_fiber_sequence}).

\begin{proposition}\label{prop:protruncation_preserves_limits}
	The protruncation functor $ \protrun \colon \fromto{\ProSpc}{\ProSpctrun} $ preserves limits.
\end{proposition}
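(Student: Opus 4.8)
The plan is to reduce the statement to the left-exactness of the ordinary truncation functors on spaces. The computational heart of the argument is the formula
\begin{equation*}
	\protrun(W) \equivalent \lim_{n \geq 0} \trun_{\leq n}(W)
\end{equation*}
for every prospace $ W $, in which $ \trun_{\leq n} \colon \fromto{\ProSpc}{\Pro(\Spc_{\leq n})} $ denotes the cofiltered-limit-preserving extension of the $ n $-truncation $ \trun_{\leq n} \colon \fromto{\Spc}{\Spc_{\leq n}} $, and the limit is formed in $ \ProSpctrun $ along the fully faithful inclusions $ \Pro(\Spc_{\leq n}) \subset \ProSpctrun $. First I would verify this formula: the right-hand side preserves cofiltered limits in $ W $ and carries a space $ U $ to its Postnikov tower $ \{\trun_{\leq n}(U)\}_{n \geq 0} $, so by the uniqueness of cofiltered-limit-preserving extensions it coincides with $ \protrun $.

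Both $ \ProSpc $ and $ \ProSpctrun $ admit all small limits, since $ \Spc $ and $ \Spctrun $ admit finite limits. It therefore suffices to check that $ \protrun $ preserves finite limits and cofiltered limits: every small product is the cofiltered limit of its finite subproducts, and a general small limit is built from products and finite limits, so these two cases together force preservation of all small limits. As $ \protrun $ preserves cofiltered limits by construction, only the finite-limit case remains.

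For finite limits the key input is that each truncation $ \trun_{\leq n} \colon \fromto{\Spc}{\Spc_{\leq n}} $ is a left exact localization \cite[\HTTsubsec{5.5.6}]{HTT}, so it preserves finite limits; as finite limits in a pro-category are computed levelwise \cite[\SAGsubsec{A.8.1}]{SAG}, its extension $ \trun_{\leq n} \colon \fromto{\ProSpc}{\Pro(\Spc_{\leq n})} $ does too. Moreover the inclusion $ \Pro(\Spc_{\leq n}) \subset \ProSpctrun $ is a right adjoint --- its left adjoint extends $ \trun_{\leq n} \colon \fromto{\Spctrun}{\Spc_{\leq n}} $ --- so the inner finite limits appearing below may equally be formed in $ \ProSpctrun $. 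For a finite diagram $ D \colon \fromto{K}{\ProSpc} $ I would then compute
\begin{equation*}
	\protrun\Big(\lim_{K} D\Big) \equivalent \lim_{n} \trun_{\leq n}\Big(\lim_{K} D\Big) \equivalent \lim_{n} \lim_{K} \trun_{\leq n}(D) \equivalent \lim_{K} \lim_{n} \trun_{\leq n}(D) \equivalent \lim_{K} \protrun(D) \period
\end{equation*}
Here the first and last equivalences are the formula from the first paragraph, the second is left-exactness of $ \trun_{\leq n} $, and the third is the commutation of limits with limits.

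The only non-formal ingredient --- and thus the crux --- is the left-exactness of the truncation functors together with its survival under passage to pro-objects. Everything else is the standard reduction of general limits to finite limits and cofiltered limits, plus a Fubini-style interchange of limits; I expect no further obstacle.
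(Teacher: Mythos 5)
There is a genuine gap at the very step you identify as the crux: the truncation functors $ \trun_{\leq n} \colon \fromto{\Spc}{\Spc_{\leq n}} $ are \emph{not} left exact localizations and do not preserve pullbacks. The standard counterexample is the cospan $ \pt \to S^1 \ot \pt $: its pullback is the loop space $ \Omega S^1 $, which has infinitely many connected components, whereas $ \trun_{\leq 0}(\pt) \cross_{\trun_{\leq 0}(S^1)} \trun_{\leq 0}(\pt) \equivalent \pt $. (What \cite[\HTTsubsec{5.5.6}]{HTT} provides is that these localizations are accessible and compatible with finite \emph{products}, not with all finite limits.) Consequently the second equivalence in your displayed chain, $ \trun_{\leq n}(\lim_{K} D) \equivalent \lim_{K} \trun_{\leq n}(D) $, already fails when $ K $ is a cospan of spaces, and the argument breaks down there. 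The rest of your skeleton --- the formula $ \protrun(W) \equivalent \lim_{n} \trun_{\leq n}(W) $, the reduction to pullbacks and cofiltered limits, and the Fubini interchange --- is fine and agrees with the paper's strategy; the problem is isolated to the left-exactness claim.

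The statement is nevertheless true, and the repair is a connectivity estimate in place of left exactness: for a cospan $ U \to W \ot V $ of spaces, the comparison map $ \fromto{U \cross_W V}{\trun_{\leq n}(U) \crosslimits_{\trun_{\leq n}(W)} \trun_{\leq n}(V)} $ is $ (n-1) $-connected, so applying $ \trun_{\leq k} $ to it yields an equivalence as soon as $ n \geq k+1 $. Since the pro-object $ \protrun(U) \crosslimits_{\protrun(W)} \protrun(V) $ is indexed by all $ n $, this cofinal family of equivalences identifies its $ k $-truncation with $ \trun_{\leq k}(U \cross_W V) $ for every $ k $, which is precisely the assertion that $ \fromto{\protrun(U \cross_W V)}{\protrun(U) \crosslimits_{\protrun(W)} \protrun(V)} $ is an equivalence of protruncated spaces. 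This is how the paper argues: the failure of $ \trun_{\leq n} $ to commute with pullbacks is bounded in a way that disappears in the limit over $ n $, but it does not vanish at any fixed $ n $.
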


\begin{proof}
	By definition, the functor $ \protrun $ preserves cofiltered limits and the terminal object; hence it suffices to show that $ \protrun $ preserves pullbacks.
	Since $ \protrun $ preserves cofiltered limits, we are reduced to showing that given a cospan $ U \to W \ot V $ of \textit{spaces}, the induced morphism 
	\begin{equation*}\label{eq:protrun_pullback_comparison}
		\fromto{\protrun(U \cross_W V)}{\protrun(U) \crosslimits_{\protrun(W)} \protrun(V)}
	\end{equation*}
	is an equivalence in $ \ProSpctrun $.
	That is, we need to show that for each integer $ k \geq 0 $, the natural morphism
	\begin{equation}\label{eq:trun_leqk_equivalence}
		\trun_{\leq k}(U \cross_W V) \equivalent \trun_{\leq k}\protrun(U \cross_W V)
		\longrightarrow 
		\trun_{\leq k}\bigg(\protrun(U) \crosslimits_{\protrun(W)} \protrun(V) \bigg)
	\end{equation}
	is an equivalence. 
	By definition,
	\begin{equation*}
		\trun_{\leq k}\bigg(\protrun(U) \crosslimits_{\protrun(W)} \protrun(V) \bigg) 
		\equivalent
		\braces{\trun_{\leq k}\bigg(\trun_{\leq n}(U) \crosslimits_{\trun_{\leq n}(W)} \trun_{\leq n}(V) \bigg)}_{n \geq 0} \period
	\end{equation*}
	By \cite[Proposition 4.13]{MR4334846}, the natural map
	\begin{equation*}
		\fromto{U \cross_{W} V}{\trun_{\leq n}(U) \crosslimits_{\trun_{\leq n}(W)} \trun_{\leq n}(V)}
	\end{equation*}
	is $ (n-1) $-connected.
	Hence for $ n \geq k + 1 $, the map  
	\begin{equation*}
		\trun_{\leq k}(U \cross_W V) 
		\longrightarrow 
		\trun_{\leq k}\bigg(\trun_{\leq n}(U) \crosslimits_{\trun_{\leq n}(W)} \trun_{\leq n}(V) \bigg)
	\end{equation*}
	is an equivalence.
	Thus the morphism \eqref{eq:trun_leqk_equivalence} is an equivalence, as desired.
\end{proof}

By \Cref{ex:classifying_prospace_locally_cartesian,prop:protruncation_preserves_limits} we see:

\begin{corollary}\label{cor:protrun_classifying_space_is_localy_cartesian}
	The localization $ \Btrun \colon \fromto{\ProCat}{\ProSpctrun} $ is locally cartesian.
\end{corollary}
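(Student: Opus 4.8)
The plan is to exploit the factorization $ \Btrun = \protrun \circ \Bup $ and to push a cospan through the two functors in turn, invoking the local cartesianness of $ \Bup $ first and the limit-preservation of $ \protrun $ second. Concretely, I would fix a cospan $ U \to W \ot V $ in $ \ProCat $ with $ U, W \in \ProSpctrun $ and aim to show that the comparison map $ \fromto{\Btrun(U \cross_W V)}{U \cross_W \Btrun(V)} $ is an equivalence, where the pullback $ U \cross_W V $ is computed in $ \ProCat $ and the right-hand pullback agrees whether computed in $ \ProSpctrun $ or in $ \ProCat $ (the inclusion $ \ProSpctrun \subset \ProCat $ is a right adjoint, hence preserves pullbacks).

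First I would record the harmless bookkeeping about local objects. Since $ U $ and $ W $ are prospaces, they are local for $ \Bup \colon \fromto{\ProCat}{\ProSpc} $, so $ \Bup(U) \equivalent U $ and $ \Bup(W) \equivalent W $; in particular the two distinguished legs of the cospan already lie in the target $ \ProSpc $ of $ \Bup $. Applying the local cartesianness of $ \Bup $ from \Cref{ex:classifying_prospace_locally_cartesian} then yields an equivalence $ \Bup(U \cross_W V) \equivalent U \cross_W \Bup(V) $ in $ \ProSpc $.

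Next I would apply $ \protrun $ to this equivalence. Because $ \protrun $ preserves limits by \Cref{prop:protruncation_preserves_limits}, it carries $ U \cross_W \Bup(V) $ to $ \protrun(U) \cross_{\protrun(W)} \protrun(\Bup(V)) $. Since $ U, W \in \ProSpctrun $ are already protruncated, $ \protrun(U) \equivalent U $ and $ \protrun(W) \equivalent W $, while $ \protrun(\Bup(V)) = \Btrun(V) $ by the definition of $ \Btrun $. Chaining these identifications with $ \Btrun(U \cross_W V) = \protrun\Bup(U \cross_W V) $ produces the desired equivalence $ \Btrun(U \cross_W V) \equivalent U \cross_W \Btrun(V) $.

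I do not expect any genuine obstacle: the corollary is a formal consequence of the two cited results, and the only point requiring care is the bookkeeping of local objects, namely checking that $ \Bup $ and $ \protrun $ each restrict to the identity on the protruncated prospaces $ U $ and $ W $, so that the composite comparison map is exactly the structure map attached to the localization $ \Btrun $. If a more conceptual packaging were wanted, I would instead prove the general lemma that a composite $ L' \circ L $ of localizations $ \fromto{\Ccal}{\Dcal}{\Ecal} $ is locally cartesian whenever $ L $ is locally cartesian and $ L' $ preserves pullbacks, and then instantiate it with $ L = \Bup $ and $ L' = \protrun $; but for this single corollary the direct chase above is shorter.
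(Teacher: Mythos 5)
Your proof is correct and takes exactly the route the paper intends: the paper deduces this corollary directly from \Cref{ex:classifying_prospace_locally_cartesian} and \Cref{prop:protruncation_preserves_limits} without further comment, and your argument simply spells out how those two facts combine via the factorization $ \Btrun = \protrun \circ \Bup $. The bookkeeping you flag (that $ \Bup $ and $ \protrun $ fix the protruncated prospaces $ U $ and $ W $, and that the chain of equivalences agrees with the canonical comparison map) is the only content beyond the two citations, and you handle it correctly.
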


\begin{corollary}\label{cor:protruncated_fundamental_fiber_sequence}
	Let $ f \colon \fromto{X}{S} $ be a morphism between qcqs schemes, and let $ \fromto{\sbar}{S} $ be a geometric point of $ S $.
	If $ \dim(S) = 0 $, then the naturally null sequence 
	\begin{equation*}
		\begin{tikzcd}[sep=1.5em]
			\Pietprotrun(\Xsbar) \arrow[r] & \Pietprotrun(X) \arrow[r] & \Pietprotrun(S) 
		\end{tikzcd}
	\end{equation*}
	is a fiber sequence in the \category $ \ProSpctrun $.
\end{corollary}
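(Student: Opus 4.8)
The plan is to transport the statement across the equivalence of \Cref{thm:exodromy_definition_of_Piet} and then read it off from the local cartesianness established in \Cref{cor:protrun_classifying_space_is_localy_cartesian}. Recall that a sequence $ A \to B \to C $ equipped with a nullhomotopy of the composite is a fiber sequence exactly when the associated square with a point in the lower-left corner is a pullback. So it suffices to show that
\begin{equation*}
	\begin{tikzcd}
		\Pietprotrun(\Xsbar) \arrow[r] \arrow[d] & \Pietprotrun(X) \arrow[d] \\
		\pt \arrow[r] & \Pietprotrun(S)
	\end{tikzcd}
\end{equation*}
is a pullback square in $ \ProSpctrun $.

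First I would apply \Cref{thm:exodromy_definition_of_Piet} to rewrite each term as $ \Pietprotrun(-) \equivalent \Btrun(\Gal(-)) $, so that the square above is identified with the image under $ \Btrun $ of the outer rectangle
\begin{equation*}
	\begin{tikzcd}
		\Gal(\Xsbar) \arrow[r] \arrow[d] & \Gal(X) \arrow[d] \\
		\pt \arrow[r] & \Gal(S)
	\end{tikzcd}
\end{equation*}
coming from \Cref{cor:Gal_pullback_geometric_fiber}, which is a pullback because it is the horizontal composite of the two pullback squares appearing there. The hypothesis $ \dim(S) = 0 $ enters decisively at this point: by \Cref{obs:Gal_is_profinite_space_for_0-dim} the profinite category $ \Gal(S) $ is a profinite $ 1 $-groupoid, hence lies in $ \ProSpcfin \subset \ProSpctrun $. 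Together with $ \pt \in \ProSpctrun $, this means the cospan $ \fromto{\pt}{\Gal(S)} \ot \Gal(X) $ has both of its outer vertices in the target subcategory of the localization $ \Btrun $ — precisely the input that local cartesianness demands.

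I would then invoke \Cref{cor:protrun_classifying_space_is_localy_cartesian}: since $ \Btrun \colon \fromto{\ProCat}{\ProSpctrun} $ is locally cartesian, the natural map
\begin{equation*}
	\fromto{\Btrun\bigl(\pt \crosslimits_{\Gal(S)} \Gal(X)\bigr)}{\pt \crosslimits_{\Gal(S)} \Btrun(\Gal(X))}
\end{equation*}
is an equivalence. Because $ \Gal(S) \in \ProSpctrun $ is fixed by the localization $ \Btrun $, this is exactly the assertion that the $ \Btrun $-image of the rectangle above is a pullback; translating back through \Cref{thm:exodromy_definition_of_Piet} then yields the claimed fiber sequence. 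I do not expect a genuine obstacle at this final assembly step: all of the real content has already been absorbed into \Cref{cor:protrun_classifying_space_is_localy_cartesian} (and, behind it, into the facts that protruncation preserves limits and that $ \Bup $ is locally cartesian). The only point deserving care is the bookkeeping that verifies the hypotheses of local cartesianness — namely that $ \dim(S) = 0 $ is exactly what forces $ \Gal(S) $ to be a space rather than a higher category, without which $ \Btrun $ would not preserve the relevant fiber.
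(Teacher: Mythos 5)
Your proposal is correct and takes essentially the same route as the paper: the paper's proof combines \Cref{thm:exodromy_definition_of_Piet}, \Cref{cor:BGal_fundamental_fiber_sequence}, and \Cref{prop:protruncation_preserves_limits}, which amounts to applying $\Bup$ and then $\protrun$ to the Galois-category pullback, whereas you apply $\Btrun = \protrun \circ \Bup$ in one step via \Cref{cor:protrun_classifying_space_is_localy_cartesian}. Since that corollary is itself deduced from \Cref{ex:classifying_prospace_locally_cartesian} and \Cref{prop:protruncation_preserves_limits}, the two arguments rest on exactly the same ingredients, and your identification of $\dim(S)=0$ as the hypothesis guaranteeing $\Gal(S) \in \ProSpctrun$ matches the paper's use of \Cref{obs:Gal_is_profinite_space_for_0-dim}.
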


\begin{proof}
	Combine \Cref{thm:exodromy_definition_of_Piet,cor:BGal_fundamental_fiber_sequence,prop:protruncation_preserves_limits}.
\end{proof}


\subsection{Local cartesianness of profinite completion}\label{subsec:local_cartesianness_profinite_completion}

We now explain why profinite completion is locally cartesian (\Cref{prop:Sigma-completion_locally_cartesian}).
From this we deduce the fundamental fiber sequence for profinite étale homotopy types (\Cref{cor:profinite_fundamental_fiber_sequence}).

Since the proof is exactly the same (and we need it in future work), we record the more general statement that completion at a set of primes is locally cartesian. 
To do so, we first introduce some definitions.

\begin{definition}\label{def:Sigma-finite}
	Let $ \Sigma $ be a set of prime numbers.
	\begin{enumerate}[label=\stlabel{def:Sigma-finite}, ref=\arabic*]
		\item A finite group $ G $ is an \defn{\Sigmagroup} if the order of $ G $ is in the multiplicative closure of $ \Sigma $. 

		\item A space $ U $ is \defn{\Sigmafinite} if $ U $ is \pifinite and all homotopy groups of $ U $ are \Sigmagroups.
		We write $ \SpcSigma \subset \Spcfin $ for the full subcategory spanned by the \Sigmafinite spaces.
	\end{enumerate} 
\end{definition}

\begin{notation}[{(\Sigmacompletion)}]
	The inclusion $ \ProSpcSigma \subset \ProSpc $ admits a left adjoint
	\begin{equation*}
		(-)\Sigmacomp \colon \fromto{\ProSpc}{\ProSpcSigma}
	\end{equation*}
	called \defn{\Sigmacompletion}.
	Write \smash{$ \BSigmacomp $} for the composite
	\begin{equation*}
		\begin{tikzcd}[sep=2em]
			\ProCat \arrow[r, "\Bup"] & \ProSpc \arrow[r, "(-)\Sigmacomp"] & \ProSpcSigma \period
		\end{tikzcd}
	\end{equation*}
\end{notation}

\begin{observation}\label{obs:profinite_completion_of_spaces_left_adjoint}
	In light of \Cref{obs:existence_of_materalization}, the composite \Sigmacompletion functor
	\begin{equation*}
		\begin{tikzcd}[sep=2em]
			\Spc \arrow[r, hooked] & \ProSpc \arrow[r, "(-)\Sigmacomp"] & \ProSpcSigma 
		\end{tikzcd}
	\end{equation*}
	is a left adjoint.
\end{observation}

In order to show that \Sigmacompletion is locally cartesian, we make use of the following generalization of \cite[\SAGthm{Theorem}{E.6.0.7} \& \SAGthm{Corollary}{E.6.0.8}]{SAG}.
See also \cite[\href{https://www.math.ias.edu/~lurie/papers/DAG-XIII.pdf\#page=74}{Proposition 3.2.4}]{DAGXIII}.

\begin{theorem}\label{thm:pro-Sigma-finite_straightening_unstraightening}
	Let $ \Sigma $ be a set of prime numbers.
	\begin{enumerate}[label=\stlabel{thm:pro-Sigma-finite_straightening_unstraightening}, ref=\arabic*]
		\item Let $ U $ be an \Sigmafinite space.
		Then the functor
		\begin{equation*}
			\textstyle \colim_U \colon \fromto{\Fun(U,\ProSpcSigma)}{\ProSpcSigma_{/U}}
		\end{equation*}
		is an equivalence of \categories.

		\item Given a map $ \fromto{U}{W} $ of \Sigmafinite spaces, the functor
		\begin{equation*}
			U \cross_W (-) \colon \fromto{\ProSpcSigma_{/W}}{\ProSpcSigma_{/U}}
		\end{equation*}
		preserves limits and colimits.
	\end{enumerate}
\end{theorem}

\noindent Lurie only states \Cref{thm:pro-Sigma-finite_straightening_unstraightening} when $ \Sigma $ is the set of all primes (so $ \SpcSigma = \Spcfin $) or a single prime.
However, the proofs given in \cite[\SAGsubsec{E.6.1} \& \SAGsubsec{E.6.2}]{SAG} work \textit{verbatim} in this more general setting.

The next lemma helps us compare \Sigmacompletions of pullbacks with pullbacks of \Sigmacompletions.

\begin{lemma}\label{lem:pullback_along_Sigma-fintie}
	Let $ \Sigma $ be a set of prime numbers and $ \fromto{U}{W} $ a map of spaces.
	Then:
	\begin{enumerate}[label=\stlabel{lem:pullback_along_Sigma-fintie}, ref=\arabic*]
		\item\label{lem:pullback_along_Sigma-fintie.1} The functor $ (U \cross_{W} (-))\Sigmacomp \colon \fromto{\Spc}{\ProSpcSigma} $ preserves colimits.

		\item\label{lem:pullback_along_Sigma-fintie.2} If $ U $ and $ W $ are \Sigmafinite, then the functor $ U \cross_{W} (-)\Sigmacomp \colon \fromto{\Spc}{\ProSpcSigma} $ preserves colimits.
	\end{enumerate}
\end{lemma}

\begin{proof}
	For \enumref{lem:pullback_along_Sigma-fintie}{1}, note that colimits are universal in $ \Spc $ and the functor $ (-)\Sigmacomp \colon \fromto{\Spc}{\ProSpcSigma} $ preserves colimits (\Cref{obs:profinite_completion_of_spaces_left_adjoint}).

	For \enumref{lem:pullback_along_Sigma-fintie}{2}, note that since $ U $ and $ W $ are \Sigmafinite, the pullback functor 
	\begin{equation*}
		U \cross_{W} (-) \colon \fromto{\ProSpcSigma_{/W}}{\ProSpcSigma_{/U}}
	\end{equation*}
	preserves colimits.
	Thus the claim follows from the fact that the functor $ (-)\Sigmacomp \colon \fromto{\Spc}{\ProSpcSigma} $ preserves colimits.
\end{proof}

\begin{proposition}\label{prop:Sigma-completion_locally_cartesian}
	Let $ \Sigma $ be a set of prime numbers.
	Then the localization
	\begin{equation*}
		(-)\Sigmacomp \colon \fromto{\ProSpc}{\ProSpcSigma}
	\end{equation*}
	is locally cartesian.
\end{proposition}

\begin{proof}
	Given a cospan $ U \to W \ot V $ in $ \ProSpc $ with $ U,W \in \ProSpcSigma $, we need to show that the natural map
	\begin{equation}\label{eq:Sigma-completion_comparison_pullbacks}
		\fromto{(U \cross_W V)\Sigmacomp}{U \cross_W V\Sigmacomp} 
	\end{equation}
	is an equivalence.
	Since \Sigmacompletion preserves cofiltered limits, we are reduced to the case where $ U,W \in \SpcSigma $ and $ V \in \Spc $.
	In this case, \Cref{lem:pullback_along_Sigma-fintie} shows that both sides of \eqref{eq:Sigma-completion_comparison_pullbacks} preserve colimits in $ V $.
	Since $ \Spc $ is generated under colimits by the point, we are reduced to showing that \eqref{eq:Sigma-completion_comparison_pullbacks} is an equivalence when $ V = \pt $; this is true because $ \pt $ is \Sigmafinite.
\end{proof}

By \Cref{ex:classifying_prospace_locally_cartesian,prop:Sigma-completion_locally_cartesian} we see:

\begin{corollary}\label{cor:Sigma-complete_classifying_space_is_localy_cartesian}
	Let $ \Sigma $ be a set of prime numbers.
	Then the localization $ \BSigmacomp \colon \fromto{\ProCat}{\ProSpcSigma} $ is locally cartesian.
\end{corollary}

\begin{corollary}\label{cor:profinite_fundamental_fiber_sequence}
	Let $ f \colon \fromto{X}{S} $ be a morphism between qcqs schemes, and let $ \fromto{\sbar}{S} $ be a geometric point of $ S $.
	If $ \dim(S) = 0 $, then the naturally null sequence 
	\begin{equation}\label{eq:profinite_fundamental_fiber_sequence}
		\begin{tikzcd}[sep=1.5em]
			\Pietprofin(\Xsbar) \arrow[r] & \Pietprofin(X) \arrow[r] & \Pietprofin(S) 
		\end{tikzcd}
	\end{equation}
	is a fiber sequence in the \category $ \ProSpcfin $.
\end{corollary}

\begin{proof}
	Combine \Cref{thm:exodromy_definition_of_Piet,cor:BGal_fundamental_fiber_sequence,prop:Sigma-completion_locally_cartesian}.
\end{proof}

\begin{warning}\label{warning:Sigma-completion_does_not_preserve_fundamental_fiber_sequence}
	The fiber sequence \eqref{eq:profinite_fundamental_fiber_sequence} need not remain a fiber sequence after completion at a set of primes.
	To see this, let $ k $ be a field with separable closure $ \kbar \supset k $ and absolute Galois group $ \Gup \colonequals \Gal(\kbar/k) $.
	Set $ S \colonequals \Spec(k) $ and $ X \colonequals \Spec(\kbar) $.
	Note that since $ \Pietprofin(\Xkbar) \equivalent \Omega\BG $ is a profinite set, it is already \Sigmacomplete.
	Write $ \Gup^{\Sigma} $ for the maximal \proSigma quotient of $ \Gup $.
	In this case, \cite[Corollary 3.7]{MR0245577} implies that the natural map
	\begin{equation*}
		\Omega\BG \equivalent \Piet(\Xkbar)\Sigmacomp \longrightarrow \Omega\paren{\Piet(S)\Sigmacomp} \equivalent \Omega\paren{(\BG)\Sigmacomp}
	\end{equation*}
	induces the quotient map $ \surjto{\Gup}{\Gup^{\Sigma}} $ on $ \uppi_0 $.
\end{warning}

Using the local cartesianness of \Sigmacompletion, we see that the failure of $ \Gal(\kbar/k) $ to be a \proSigma group is the only obstruction to \eqref{eq:profinite_fundamental_fiber_sequence} remaining a fiber sequence after \Sigmacompletion: 

\begin{definition}
	Let $ \Sigma $ be a set of prime numbers and let $ k $ be a field.
	We say that $ k $ is \defn{\Sigmaclosed}%
	\footnote{For a prime $ p $, the notion of a $ p $-closed field used here is stronger than the one introduced in \cite[Chapter VI, \S1]{MR2392026}.}
	if for every finite Galois extension $ K \supset k $ and prime $ \el \in \Sigma $, the degree of $ K $ over $ k $ is \textit{not} divisible by $ \el $.
\end{definition}

\begin{nul}
	Given a set of prime numbers $ \Sigma $, write $ \Sigma' $ for the complement of $ \Sigma $ in the set of all primes.
	By the fundamental theorem of Galois theory, a field $ k $ is \Sigmaprimeclosed if and only if for any separable closure $ \kbar \supset k $, the Galois group $ \Gal(\kbar/k) $ is a \proSigma group.
\end{nul}

Logic provides a source of examples of \Sigmaclosed fields:

\begin{example}\label{ex:Sigma-closed_fields}
	\hfill
	\begin{enumerate}[label=\stlabel{ex:Sigma-closed_fields}, ref=\arabic*]
		\item If $ k $ is a real closed field, then $ k $ is $ 2' $-closed.

		\item Let $ k $ be a field of characteristic $ p > 0 $. 
		If $ k $ is infinite and does not have the independence property (i.e., is a \textit{NIP field}), then $ k $ is $ p $-closed \cites[Corollary 4.4]{MR2837131}{Scanlon:stable_fields_are_Artin-Schreier_closed}
	\end{enumerate}
\end{example}

\begin{observation}\label{obs:Sigma-completeness_for_0-dimensional_schemes}
	Let $ S $ be a $ 0 $-dimensional qcqs scheme and let $ \Sigma $ be a set of prime numbers.
	Since the profinite étale homotopy type $ \Pietprofin(S) $ is a profinite $ 1 $-groupoid with automorphism groups the absolute Galois groups of the residue fields of $ S $, the profinite space $ \Pietprofin(S) $ is \Sigmacomplete if and only if each residue field of $ S $ is \Sigmaprimeclosed.
\end{observation}

\begin{corollary}
	Let $ f \colon \fromto{X}{S} $ be a morphism between qcqs schemes, let $ \fromto{\sbar}{S} $ be a geometric point of $ S $, and let $ \Sigma $ be a set of prime numbers.
	If $ \dim(S) = 0 $ and each residue field of $ S $ is \Sigmaprimeclosed, then the naturally null sequence 
	\begin{equation*}
		\begin{tikzcd}[sep=1.5em]
			\Piet(\Xsbar)\Sigmacomp \arrow[r] & \Piet(X)\Sigmacomp \arrow[r] & \Piet(S)\Sigmacomp 
		\end{tikzcd}
	\end{equation*}
	is a fiber sequence in the \category $ \ProSpcSigma $.
\end{corollary}

\begin{proof}
	By assumption, $ \Pietprofin(S) $ is \Sigmacomplete; thus the conclusion follows from \Cref{prop:Sigma-completion_locally_cartesian,cor:profinite_fundamental_fiber_sequence}.
\end{proof}


\DeclareFieldFormat{labelnumberwidth}{#1}
\printbibliography[keyword=alph, heading=references]
\DeclareFieldFormat{labelnumberwidth}{{#1\adddot\midsentence}}
\printbibliography[heading=none, notkeyword=alph]

\end{document}